\documentclass[11pt]{amsart}
\usepackage{amsfonts,epsfig}
\usepackage{latexsym}
\usepackage{amssymb}
\usepackage{amsmath}
\usepackage{amsthm}
\usepackage{graphics}
\usepackage[all]{xy}
\usepackage[T2A]{fontenc}
\usepackage{multirow}
\usepackage{hhline}
\usepackage{array, booktabs, ctable}
\usepackage{hyperref}
\usepackage{enumerate}

\addtolength{\textwidth}{4cm} \addtolength{\hoffset}{-2cm}
\addtolength{\marginparwidth}{-2cm} 

\newtheorem{defn}{Definition}[section]

\newtheorem{lemma}[defn]{Lemma}

\newtheorem{thm}[defn]{Theorem}

\newtheorem{cor}[defn]{Corollary}

\newtheorem{conj}[defn]{Conjecture}

\theoremstyle{definition}

\newtheorem*{ack}{Acknowledgements}
\newtheorem{remark}[defn]{Remark}

\newtheorem{example}[defn]{Example}


\newcommand{\Q}{\mathbb Q}
\newcommand{\Z}{\mathbb Z}

\newcommand{\F}{\mathbb F}

\newcommand{\Rats}{\mathbb{Q}}

\newcommand{\Ker}{\operatorname{Ker}}

\newcommand{\Gal}{\operatorname{Gal}}

\newcommand{\GQ}{\Gal(\overline{\Rats}/\Rats)}
\newcommand{\GK}{\Gal(\overline{K}/K)}

\newcommand{\GL}{\operatorname{GL}}
\newcommand{\PGL}{\operatorname{PGL}}



\begin{document}

\bibliographystyle{plain}
\title[On the $p$-primary torsion subgroup of elliptic curves]{On the minimal degree of definition of $p$-primary torsion subgroups of elliptic curves}

\author{Enrique Gonz\'alez--Jim\'enez}
\address{Universidad Aut{\'o}noma de Madrid, Departamento de Matem{\'a}ticas, Madrid, Spain}
\email{enrique.gonzalez.jimenez@uam.es}
\urladdr{http://www.uam.es/enrique.gonzalez.jimenez}
\author{\'Alvaro Lozano-Robledo}
\address{University of Connecticut, Department of Mathematics, Storrs, CT 06269, USA}
\email{alvaro.lozano-robledo@uconn.edu} 
\urladdr{http://alozano.clas.uconn.edu}

\subjclass{Primary: 11G05, Secondary: 14H52.}
\thanks{The first author was partially  supported by the grant MTM2012--35849.}

\begin{abstract} In this article, we study the minimal degree $[K(T):K]$ of a $p$-subgroup  $T\subseteq E(\overline{K})_\text{tors}$  for an elliptic curve $E/K$ defined over a number field $K$. Our results depend on the shape of the image of the $p$-adic Galois representation $\rho_{E,p^\infty}:\GK\to \GL(2,\Z_p)$. However, we are able to show that there are certain uniform bounds for the minimal degree of definition of $T$. When the results are applied to $K=\Q$ and $p=2$, we obtain a divisibility condition on the minimal degree of definition of any subgroup of $E[2^n]$ that is best possible.
\end{abstract}

\maketitle

\section{Introduction}

Let $E$ be an elliptic curve defined over $\Q$, let $p$ be a prime number, and let $N\geq 1$. Let $\overline{\Q}$ be a fixed algebraic closure of $\Q$, and let $E[p^N]$ be the subgroup of algebraic points on $E(\overline{\Q})$ that are torsion points of order dividing $p^N$. In other words, $E[p^N]$ is the kernel of the multiplication-by-$p^N$ map $[p^N]:E\to E$. Let $T\subseteq E[p^N]$ be a $p$-subgroup. The central object of study of this article is the field of definition of $T$, namely $\Q(T):=\Q(\{x(P),y(P): P=(x(P),y(P))\in T \})$. 

It is well-known that a torsion subgroup $E(\Q)_\text{tors}$ may contain points of prime-power order $8$, $9$, $5$, or $7$, but no points of order $16$, $27$, $25$, $49$, or $p>7$. This follows for example from a theorem of Mazur (Theorem \ref{thm-mazur} below). Similarly, a theorem of Kamienny, Kenku, and Momose  (Theorem \ref{thm-quadgroups}), shows that there are quadratic fields $K$ and elliptic curves $E/K$ such that $E(K)$ contains a point of order $16$. However, one cannot find points of order $32$ in $E(K)$, for a curve $E/K$ and a quadratic extension $K/\Q$. Points of order $16$ also may appear starting from an elliptic curve $E/\Q$ and considering $E(K)$, where $K/\Q$ is quadratic (see Theorem \ref{thm-najman1}). More generally, the second author has shown that if $E/\Q$ is an elliptic curve over $\Q$, and $R\in E(\overline{\mathbb{Q}})$ is a point of order $p=11$ or $p>13$, then $[\Q(P):\Q]\geq (p-1)/2$ (see \cite{lozano0}, Theorem 2.1). 

In this article, we fix a number field $K$ and we study the minimal degree $[K(T):K]$ of a subgroup  $T\subseteq E(\overline{K})_\text{tors}$ with $T\cong \Z/p^s\Z\oplus \Z/p^N\Z$ for an elliptic curve $E/K$ defined over $K$. Our results depend on the shape of the image of the $p$-adic Galois representation $\rho_{E,p^\infty}:\GK\to \GL(2,\Z_p)$. However, we are able to show that there are certain uniform bounds for the minimal degree of definition of $T$. 

\begin{thm}\label{thm-mainintro1}
Let $p$ be a prime, let $K$ be a number field, and let $E/K$ be an elliptic curve defined over $K$ without complex multiplication. Let $0\leq s\leq N$ be integers, and let $T_{s,N}\subseteq E(\overline{K})_\text{tors}$ with $T_{s,N}\cong \Z/p^s\Z\oplus \Z/p^N\Z$. Then: 
\begin{enumerate} 
\item There are positive integers $n=n(K,p)$, $g_{s,M}(K,p)$, and $m_{s,M}(K,p)$, for $0\leq s\leq n$ and $M=\min\{n,N\}$, that depend on $K$ and $p$ but not on the choice of $E/K$ or $T_{s,N}$, such that 
the degree $[K(T_{s,N}):K]$ is divisible by $g_{s,M}(K,p)\cdot \max\{1, p^{2N-2n}  \}$, and  $[K(T_{s,N}):K]\geq m_{s,M}(K,p)\cdot \max\{1, p^{2N-2n}  \}$ if $s<n$, and the degree is divisible by $g_{n,n}(K,p)\cdot p^{2N+2s-4n}$, and  $[K(T_{s,N}):K]\geq m_{n,n}(K,p)\cdot p^{2N+2s-4n}$ if $n\leq s \leq N$.
\item For a fixed $E/K$, and for all but finitely many primes $p$, we have
$$[K(T_{s,N}):K] = \begin{cases} (p^2-1)p^{2N-2} &,  \text{ if } s=0,\\
(p-1)(p^2-1)p^{2N+2s-3} &, \text{ if } s\geq 1. \end{cases}$$
\end{enumerate}
\end{thm}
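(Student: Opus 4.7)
The key identity is $[K(T_{s,N}):K]=[G_N:\mathrm{Stab}_{G_N}(T_{s,N})]$, where $G_N:=\rho_{E,p^N}(\GK)\subseteq\GL(2,\Z/p^N\Z)$ and $\mathrm{Stab}$ denotes the \emph{pointwise} stabilizer. Both parts reduce to (a) an explicit computation of pointwise stabilizers of subgroups of shape $\Z/p^s\Z\oplus\Z/p^N\Z$ inside $\GL(2,\Z/p^N\Z)$, and (b) a description of $G_N$ fine enough to compute this index.

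First I would pick a $\Z/p^N\Z$-basis $\{P,Q\}$ of $E[p^N]$ adapted to $T_{s,N}$, so that $T_{s,N}=\langle P,\,p^{N-s}Q\rangle$. A direct check shows that a matrix $\begin{pmatrix}a&b\\c&d\end{pmatrix}$ fixes both generators iff $a\equiv 1$, $c\equiv 0\pmod{p^N}$, $b\equiv 0$ and $d\equiv 1\pmod{p^s}$, giving a stabilizer of order $p^{2N-1}(p-1)$ when $s=0$ and $p^{2(N-s)}$ when $s\geq 1$. For Part (2), Serre's open image theorem guarantees that, since $E/K$ is non-CM, $\rho_{E,p^\infty}(\GK)=\GL(2,\Z_p)$ for all but finitely many primes $p$. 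For such $p$ one has $|G_N|=p^{4N-3}(p-1)(p^2-1)$, and dividing by the stabilizer sizes above yields exactly the two formulas claimed.

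For Part (1), Serre's theorem also provides an integer $n$ such that $G_\infty:=\rho_{E,p^\infty}(\GK)$ contains the kernel of the reduction $\GL(2,\Z_p)\to\GL(2,\Z/p^n\Z)$. For $N\geq n$ this gives $G_N=\pi_N^{-1}(G_n)$, where $\pi_N:\GL(2,\Z/p^N\Z)\to\GL(2,\Z/p^n\Z)$. I would then decompose $\mathrm{Stab}_{G_N}(T_{s,N})$ via the exact sequence $1\to \ker\pi_N\cap\mathrm{Stab}(T_{s,N})\to\mathrm{Stab}_{G_N}(T_{s,N})\to (\text{image in }G_n)\to 1$. Running the matrix count from Step~1 inside $\ker\pi_N$ (elements $I+p^nM$) shows that the stabilizer-in-kernel has order $p^{2(N-n)}$ when $s\leq n$ and $p^{2(N-s)}$ when $s>n$; since $|\ker\pi_N|=p^{4(N-n)}$, the kernel contributes the factor $p^{2N-2n}$ (respectively $p^{2N+2s-4n}$) to the index, which is precisely the power of $p$ appearing in the statement. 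One then defines $g_{s,M}(K,p)$ and $m_{s,M}(K,p)$ (with $M=\min\{n,N\}$) as the $\gcd$ and the minimum, respectively, of $[G_M:\mathrm{Stab}_{G_M}(T_{s,M})]$ as $E$ ranges over non-CM elliptic curves over $K$.

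The main obstacle is the uniformity claim that $n=n(K,p)$ may be chosen independently of $E/K$; this asks for a uniform version of Serre's open image theorem, bounding $[\GL(2,\Z_p):G_\infty]$ across all non-CM $E/K$. For $K=\Q$ and small $p$ this is available via classifications of the possible mod-$p^n$ images (Zywina, Sutherland, Rouse--Zureick-Brown), but in general one has to invoke partial results toward Serre's uniformity conjecture. A secondary technicality is the two-regime split ($s\leq n$ versus $s>n$) that produces the exponent $p^{2N+2s-4n}$, which forces the kernel-plus-quotient decomposition above rather than a single direct matrix count.
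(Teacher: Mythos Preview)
Your approach is essentially identical to the paper's: the paper also proves the theorem via the index formula $[K(T):K]=|G_N|/|H_T|$, the explicit description of the pointwise stabilizer $H_T$, the three-case split according to the relative sizes of $s$, $N$, and the level of definition $d$, and Serre's open image theorem for Part~(2). Your kernel/quotient decomposition for $N\geq n$ is exactly the content of cases (ii) and (iii) of the paper's Theorem~\ref{thm-bounds}, and your definition of $g_{s,M}$ and $m_{s,M}$ as $\gcd$ and $\min$ over all non-CM curves matches Corollary~\ref{cor-gsNgeneral}.

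There is, however, one genuine gap. You identify the uniformity of $n=n(K,p)$ across all non-CM $E/K$ as the ``main obstacle'' and assert that in general one must ``invoke partial results toward Serre's uniformity conjecture.'' This conflates two distinct uniformity statements. Serre's uniformity conjecture concerns surjectivity of $\rho_{E,p}$ for all $p$ beyond a bound depending only on $K$, and is indeed open. But what Part~(1) requires is much weaker: for each \emph{fixed} prime $p$, a level $n(K,p)$ such that $1+p^{n}M_2(\Z_p)\subseteq \rho_{E,p^\infty}(\GK)$ for every non-CM $E/K$. This is an unconditional theorem of Arai (the paper's Theorem~\ref{thm-arai}), proved via Faltings' theorem applied to the modular curves $X(p^n)$. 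Once you plug in Arai's result, your argument goes through exactly as written and there is nothing conjectural or partial about it.
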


The integer $n(K,p)$ that appears in Theorem \ref{thm-mainintro1} is the smallest integer such that the image of the $p$-adic Galois representation $\rho_{E,p^\infty}$ is completely defined modulo $p^{n(K,p)}$ for all elliptic curves $E/K$ without complex multiplication. The existence of $n(K,p)$ is shown in Theorem \ref{thm-arai}.

We apply the uniform results of Theorem \ref{thm-mainintro1} to the case of $K=\Q$. For instance, we show that the minimal degree of a point of order $32$ for elliptic curves over $\Q$ is $8$. In fact, we show the following uniform divisibility result about the degree of the extension $\Q(P)/\Q$, for a point of arbitrary order $2^N$, for $N\geq 4$.

\begin{thm}\label{thm-main1}
Let $E/\Q$ be an elliptic curve defined over $\Q$ without CM, and let $P\in E[2^N]$ be a point of exact order $2^N$, with $N\geq 4$. Then, the degree $[\Q(P):\Q]$ is divisible by $2^{2N-7}$. Moreover, this bound is best possible, in the sense that there is a one-parameter family $E_t$ of elliptic curves over $\Q$ such that, for each $t\in\Q$, there is a points $P_{t,N}\in E_t(\overline{\Q})$ of exact order $2^N$, such that 
$$[\Q(P_{t,N}):\Q]=2^{2N-7}.$$ 
\end{thm}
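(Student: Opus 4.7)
The plan is to obtain Theorem \ref{thm-main1} as a concrete application of Theorem \ref{thm-mainintro1}(1) with $K=\Q$, $p=2$, and $s=0$, applied to the cyclic subgroup $T=\langle P\rangle\cong\Z/2^N\Z$ generated by $P$. Under this specialisation the divisibility claim reduces to computing two numerical constants: the integer $n=n(\Q,2)$ from Theorem \ref{thm-arai}, above which the $2$-adic image of Galois is completely defined modulo $2^n$, and the constant $g_{0,\min\{n,N\}}(\Q,2)$. By the classification of Rouse and Zureick-Brown, for every non-CM elliptic curve $E/\Q$ the image of $\rho_{E,2^\infty}$ in $\GL(2,\Z_2)$ equals the full preimage of its reduction modulo $32$, so $n(\Q,2)=5$. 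For the second constant, one goes through their finite list of mod-$32$ images $H\subseteq \GL(2,\Z/32\Z)$ realised by non-CM curves over $\Q$ and, for each $H$, computes the minimal length of an $H$-orbit on the cyclic subgroups of $(\Z/2^5\Z)^2$ of order $32$; verifying case-by-case that this minimum is divisible by $8$ yields $g_{0,5}(\Q,2)=8$, and Theorem \ref{thm-mainintro1}(1) then produces the divisor $8\cdot 2^{2N-10}=2^{2N-7}$ for $N\geq 5$. A parallel analysis modulo $16$ gives $g_{0,4}(\Q,2)=2$, which settles the base case $N=4$.

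For the sharpness statement, the plan is to isolate a mod-$32$ image $H_0$ in the Rouse--Zureick-Brown list that realises equality in the minimum computed above and whose associated modular curve has genus zero with a $\Q$-rational parametrisation. A natural candidate is the family of elliptic curves $E_t/\Q$ that acquire a point of order $16$ over a quadratic extension of $\Q$; these form a one-parameter family by Theorem \ref{thm-najman1} and the classification of the corresponding modular curve. Let $P_{t,4}$ denote such a point of order $16$, and construct $P_{t,N}$ inductively by successive halvings $2P_{t,k+1}=P_{t,k}$ for $k\geq 4$. The extension $\Q(P_{t,k+1})/\Q(P_{t,k})$ is cut out by the class $\sigma\mapsto\sigma(P_{t,k+1})-P_{t,k+1}\in E_t[2]$, and the choice of $H_0$ is arranged so that this class has image equal to all of $E_t[2]$ for each $k\geq 4$, yielding $[\Q(P_{t,k+1}):\Q(P_{t,k})]=4$. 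Telescoping from $[\Q(P_{t,4}):\Q]=2$ then gives $[\Q(P_{t,N}):\Q]=2\cdot 4^{N-4}=2^{2N-7}$.

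The main obstacle is the uniform verification in the first paragraph: the Rouse--Zureick-Brown list contains many mod-$32$ images realised over $\Q$, and divisibility by $8$ must be confirmed for every one of them, not just the generic case. In practice the calculation is organised by grouping images according to the $H$-orbit structure on cyclic order-$32$ subgroups of $(\Z/2^5\Z)^2$, using the Borel/Cartan/exceptional classification of the mod-$32$ image to reduce to a small number of explicit orbit counts. A further technical point, in the sharpness construction, is to verify that the chosen one-parameter family truly has mod-$32$ image exactly $H_0$ and not a proper subgroup, which can be checked on an explicit Tate-normal-form model of the family.
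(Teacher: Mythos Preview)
Your overall framework is correct and is exactly the paper's approach: deduce the divisibility from Theorem~\ref{thm-mainintro1}(1) (equivalently Corollary~\ref{cor-gsNgeneral}) specialised to $K=\Q$, $p=2$, $s=0$, together with the Rouse--Zureick-Brown input $n(\Q,2)=5$ and the explicit computation $g_{0,5}(\Q,2)=2^3$, $g_{0,4}(\Q,2)=2$; and realise sharpness via a one-parameter family with a point of order $16$ over a quadratic field, then halve. The paper's family is precisely $\mathcal{X}_{235l}$, and the halving step is the $2$-divisibility argument you outline.

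Two technical slips in your description should be fixed. First, $[\Q(P):\Q]$ equals $|G_N|/|H_T|$ where $H_T$ is the stabiliser in $G_N$ of the \emph{point} $P$, not of the cyclic subgroup $\langle P\rangle$; the latter stabiliser is strictly larger in general (it allows $P\mapsto uP$ for units $u$), so computing orbits of cyclic subgroups would give the wrong numbers. Second, to conclude that $2^{2N-7}$ divides $[\Q(P):\Q]$ for \emph{every} such $P$, you must check that $8$ divides \emph{every} point-orbit length (equivalently that $8\mid g_{0,5}(G^i)$ for each image $G^i$), not merely that the minimum orbit length is divisible by $8$; the minimum being divisible by $8$ does not by itself force the gcd to be.

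Finally, your proposed organisation of the case-by-case check via a ``Borel/Cartan/exceptional'' dichotomy is not how the paper proceeds and is unlikely to work as stated: that trichotomy is a mod-$p$ phenomenon, whereas the Rouse--Zureick-Brown list consists of $1208$ distinct $2$-adic images defined at levels up to $2^5$. The paper carries out the verification by direct Magma computation over all $1208$ images (see Corollary~\ref{cor-gsN} and the file \texttt{2primary\_Ss.txt}), and the sharpness witness $\mathcal{X}_{235l}$ is read off from that data.
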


The family mentioned in the statement of Theorem \ref{thm-main1} is $\mathcal{X}_{235l}$, which parametrizes all elliptic curves over $\Q$ with $2$-adic image \texttt{X235l} in the notation of \cite{rouse} (see Section \ref{sec-examples}). One concrete member of the family is the curve with Cremona label \cite{cremonaweb} \texttt{210e1}, given in Weierstrass form by
$$E: y^2+xy=x^3+210x+900.$$
As a corollary of Theorem \ref{thm-main1}, we obtain the following bound on two-torsion points.
\begin{cor}
Let $E/\Q$ be an elliptic curve without CM, and let $F/\Q$ be an extension of degree $d\geq 1$. Then $E(F)$ can only contain points of order $2^N$ with
$N\leq (\log_2(d)+7)/2.$ More precisely, if $\nu_2$ is the usual $2$-adic valuation, then $E(F)$ can only contain points of order $2^N$ with $N\leq \lfloor\frac{\nu_2(d)+7}{2}\rfloor$.
\end{cor}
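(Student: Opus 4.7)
The corollary is essentially a repackaging of Theorem \ref{thm-main1}. My plan is to reduce ``$E(F)$ contains a point of order $2^N$'' to a divisibility of field degrees, apply Theorem \ref{thm-main1} to extract a $2$-adic divisibility on $d$, and then convert that divisibility into the claimed inequality on $N$. The main effort has already been spent in proving Theorem \ref{thm-main1}; here there is no new structural input.

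Concretely, suppose $P\in E(F)$ has exact order $2^N$. Then $\Q(P)\subseteq F$, so $[\Q(P):\Q]$ divides $d=[F:\Q]$. Provided $N\geq 4$, Theorem \ref{thm-main1} yields $2^{2N-7}\mid [\Q(P):\Q]$, and hence $2^{2N-7}\mid d$. Passing to the $2$-adic valuation gives $2N-7\leq \nu_2(d)$, and since $N$ is an integer this forces
\[
N\leq \left\lfloor \frac{\nu_2(d)+7}{2}\right\rfloor.
\]
For the small range $N\leq 3$ (where Theorem \ref{thm-main1} has nothing to say) the inequality is automatic, because $\lfloor(\nu_2(d)+7)/2\rfloor \geq \lfloor 7/2\rfloor = 3 \geq N$ for every $d\geq 1$. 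The coarser logarithmic bound $N\leq (\log_2 d+7)/2$ then follows from the trivial estimate $\nu_2(d)\leq \log_2 d$, a consequence of $2^{\nu_2(d)}\leq d$.

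The only items worth double-checking are the boundary cases $N\leq 3$ and $d=1$, both of which are absorbed by the observation $\lfloor 7/2\rfloor=3$ above. Since Theorem \ref{thm-main1} is assumed, I do not expect any genuine obstacle in carrying out this argument; the corollary is truly a one-line divisibility consequence.
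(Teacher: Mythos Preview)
Your argument is correct and is exactly the intended derivation: the paper states this as an immediate corollary of Theorem~\ref{thm-main1}, and your proof spells out the obvious divisibility-to-inequality conversion, including the boundary cases $N\leq 3$. There is nothing to add.
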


Exploiting recent work of Rouse and Zureick-Brown \cite{rouse} that classifies all possible $2$-adic images for elliptic curves over $\Q$, one can calculate explicitly the constants $g_{s,N}(\Q,2)$ of Theorem \ref{thm-mainintro1} and calculate the minimal degree of definition of various subgroups of $E[2^N]$, for an elliptic curve $E$ defined over $\Q$. As before, Mazur's theorem implies that the $2$-primary component of any torsion subgroup $E(\Q)_\text{tors}$ is isomorphic to a subgroup of  $\Z/2\Z\oplus \Z/8\Z$, while the theorems of Kenku, Kamienny, and Momose imply that the $2$-primary components defined over a quadratic field are isomorphic to subgroups of  $\Z/16\Z$, $\Z/2\Z\oplus \Z/8\Z$, or $\Z/4\Z\oplus \Z/4\Z$. Our second main theorem, gives the best possible (divisibility) bound for the degree of definition of any torsion subgroup $T\cong \Z/2^s\Z \oplus \Z/2^{N}\Z$, for any $0\leq s\leq N$, for an elliptic curve over $\Q$ without CM.

\begin{thm}\label{thm-main2}
Let $E/\Q$ be an elliptic curve without CM. Let $1\leq s \leq N$ be fixed integers, and let $T\subseteq E[2^N]$ be a subgroup isomorphic to $\Z/2^s\Z \oplus \Z/2^{N}\Z$. Then, $[\Q(T):\Q]$ is divisible by $2$ if $s=N=2$, and otherwise by $2^{2N+2s-8}$ if $N\geq 3$, 
unless $s\geq 4$ and $j(E)$ is one of the two values
$$-\frac{3\cdot 18249920^3}{17^{16}} \text{ or } -\frac{7\cdot 1723187806080^3}{79^{16}}$$
in which case $[\Q(T):\Q]$ is divisible by $3\cdot 2^{2N+2s-9}$. Moreover, this bound is best possible, in the sense that there are one-parameter families $E_{s,N}(t)$ of elliptic curves over $\Q$ such that, for each $s,N\geq 0$ and each $t\in\Q$, and subgroups $T_{s,N}\in E_{s,N}(t)(\overline{\Q})$ isomorphic to $\Z/2^s\Z\oplus \Z/2^{N}\Z$, such that 
$[\Q(T_{s,N}):\Q]$ is equal to the bound given above. 
\end{thm}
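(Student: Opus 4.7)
The strategy is to leverage the classification of Rouse and Zureick-Brown \cite{rouse}, which describes the possible images $H := \rho_{E,2^\infty}(G_\Q) \subseteq \GL(2,\Z_2)$ for all non-CM elliptic curves $E/\Q$ as a finite explicit list of congruence subgroups up to conjugation. Since $[\Q(T):\Q]$ equals the index $[H : \mathrm{Stab}_H(T)]$ of the stabilizer of $T$ under the action of $H$ on $E[2^N]\simeq(\Z/2^N\Z)^2$, the problem of minimizing the degree over all $T \cong \Z/2^s\Z \oplus \Z/2^N\Z$ becomes a finite group-theoretic computation for each candidate image $H$. By the existence of $n(\Q,2)$ (Theorem \ref{thm-arai}) together with the RZB data, each such $H$ is determined by its reduction modulo $2^{n(\Q,2)}$, so it suffices to work modulo $2^{\max(N,n(\Q,2))}$.

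The first step is to extract the generic bound. Applying Theorem \ref{thm-mainintro1}(1) to $K=\Q$, $p=2$, and to the appropriate value of $n(\Q,2)$ read off from \cite{rouse}, together with a direct computation of the constant $g_{n,n}(\Q,2)$, yields divisibility of $[\Q(T):\Q]$ by $2^{2N+2s-8}$ for all $N\geq 3$ and all $1\leq s\leq N$ outside of a finite collection of exceptional RZB groups, and the degenerate divisibility by $2$ in the case $s=N=2$. The divisibility by $2^{2N-7}$ of Theorem \ref{thm-main1} (for points of exact order $2^N$, corresponding to $s=0$) serves as a template for how the power of $2$ gets assembled from the mod-$2^n$ level together with the $p$-adic estimate $p^{2N-2n}$.

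The second step is to pin down the sporadic cases. For a handful of RZB images the orbit structure of $H$ on subgroups of type $\Z/2^s\Z \oplus \Z/2^N\Z$ is exceptionally small, producing an index divisible by $3\cdot 2^{2N+2s-9}$ rather than $2^{2N+2s-8}$; the factor of $3$ arises from an $H$-orbit of length $3$ on a distinguished set of such subgroups in $E[2^N]$. A case-by-case inspection of the relevant subgroups in \cite{rouse} (for $s\geq 4$, where the exceptional behavior first occurs) shows that the only elliptic curves over $\Q$ whose $2$-adic image falls into one of these sporadic classes correspond to rational points on specific exceptional modular curves in the RZB list, and reading off the $j$-invariants of those points produces precisely the two values $-3\cdot 18249920^3/17^{16}$ and $-7\cdot 1723187806080^3/79^{16}$.

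Finally, to verify that the divisibility bound is best possible, I would construct, for each pair $(s,N)$, an explicit one-parameter family $E_{s,N}(t)$ of elliptic curves whose $2$-adic image equals the generic RZB subgroup (analogous to the family $\mathcal{X}_{235l}$ used for Theorem \ref{thm-main1}), obtained by parameterizing a rational modular curve $X_H \cong \PP^1$. For each $t \in \Q$, the corresponding coordinates of the $2^N$-torsion then produce a subgroup $T_{s,N} \subseteq E_{s,N}(t)(\overline{\Q})$ of the required isomorphism type with $[\Q(T_{s,N}):\Q]$ equal to the stated bound. The main obstacle is the sporadic-case analysis: systematically scanning the entire RZB classification to locate the subgroups that force the $3\cdot 2^{2N+2s-9}$ bound, verifying that the factor of $3$ really does appear (and not just divisibility by $2^{2N+2s-8}$), and confirming via the rational-points data on the associated modular curves that the two listed $j$-invariants exhaust this exceptional class.
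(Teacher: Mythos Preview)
Your proposal is correct and follows essentially the same approach as the paper: reduce to a finite group-theoretic computation over the $1208$ Rouse--Zureick-Brown images (using $n(\Q,2)=5$), extract the uniform divisibility constants $g_{s,M}(\Q,2)$ via a Magma search, identify the single exceptional image (labeled \texttt{X441}, which is the modular curve $X^+_{\mathrm{ns}}(16)$) responsible for the $3\cdot 2^{2N+2s-9}$ bound when $s\geq 4$, and read off the two $j$-invariants from Baran's determination of its rational points; sharpness is then witnessed by the explicit one-parameter families $\mathcal{X}_{193n}$, $\mathcal{X}_{235l}$, $\mathcal{X}_{58i}$ in the RZB database.
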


We remark here that the $2$-torsion subgroups that are not covered by Theorems \ref{thm-main1} and \ref{thm-main2}, namely those that correspond to pairs $(s,N)=(0,1)$, $(0,2)$, $(0,3)$, $(1,1)$, and $(1,2)$, are known to appear infinitely many times as defined over $\Q$, by Mazur's theorem (Theorem \ref{thm-mazur}). Also, it is worth pointing out that the two $j$-invariants that appear in the statement of Theorem \ref{thm-main2} are two of the $j$-invariants that appear in Theorem 1.1 and Table 1 of \cite{rouse}.

\begin{example}

 As a consequence of Corollary \ref{cor-Q}, we can calculate the first degree $d$ where a certain $2$-primary torsion structure appears for some elliptic curve $E/\Q$ without CM. We do this in Table \ref{exmind} for $d\leq 16$, i.e., for each $d\leq 16$, we list all the possible torsion structures $\Z/2^s\Z\oplus \Z/2^{N}\Z$, such that there exists an elliptic curve $E$ defined over $\Q$ with $T_{s,N}\subseteq E_{s,N}(\overline{\Q})$ isomorphic to $\Z/2^s\Z\oplus \Z/2^{N}\Z$ and $[\Q(T_{s,N}):\Q]=d$, and such that $d$ is smallest with this property.

\begin{table}[h!]
\renewcommand{\arraystretch}{1.2}
$$
\begin{array}{|c|c|c|c|c|}
\hline
\multicolumn{5}{|c|}{d}\\
\hline
 1 & 2 & 4  & 8 & 16 \\
\hline   \begin{array}{c}\Z/2\Z \\ \Z/4\Z \\ \Z/8\Z\\ \Z/2\Z \oplus\Z/2\Z \\ \Z/2\Z \oplus\Z/4\Z \\ \Z/2\Z \oplus \Z/8\Z\end{array} & \begin{array}{c}\Z/16\Z \\ \Z/4\Z \oplus \Z/4\Z\end{array} & 
\begin{array}{c} \Z/2\Z \oplus \Z/16\Z \\ \Z/4\Z \oplus \Z/8\Z\end{array}  
& \Z/32\Z & 
\begin{array}{c} \Z/2\Z \oplus \Z/32\Z \\ \Z/4\Z \oplus \Z/16\Z \\ \Z/8\Z \oplus \Z/8\Z \end{array} 
\\ \hline   \end{array} 
$$
\caption{$2$-primary torsion subgroups that appear in degree $d$ for the first time.}\label{exmind}
\end{table}

\end{example}

For $p>2$, the classification of all possible $p$-adic images of Galois representations associated to elliptic curves $E/\Q$ is not known. In fact, the classification of all possible mod-$p$ images is not known, since we do not know whether there are elliptic curves without CM such that the mod-$p$ image is contained in a normalizer of a non-split Cartan subgroup of $\GL(2,\Z/p\Z)$ when $p\geq 13$ (see the introduction of \cite{lozano0} for a discussion of this topic). However, Sutherland and Zywina (\cite{sutherland}, \cite{zywina1}) have a list of $63$ mod-$p$ images that do occur for non-CM curves $E/\Q$, and that would be the complete  list if the answer to Serre's uniformity question is positive. Using this list of images, we can show the following theorem about $p$-adic representations that are defined modulo $p$, i.e., for those $p$-adic images that are the full inverse image of their mod-$p$ image.

\begin{thm}\label{thm-mainintro3}
Let $E/\Q$ be an elliptic curve without CM, and let $p$ be a prime such that 
\begin{enumerate}
\item[(A)] the image $G_1$ of $\rho_{E,p}:\GQ\to \GL(2,\Z/p\Z)$ is not contained in the normalizer of a non-split Cartan subgroup.
\end{enumerate} In addition, let us assume that either (B) or (C) occurs, where
\begin{enumerate}
\item[(B)] $p$ is not in the set $S=\{2,3,5,7,11,13,17,37\}$, or
\item[(C)] if $p\in S$, we suppose that  the $p$-adic image $G$ of $\rho_{E,p^\infty}$ is defined modulo $p$, i.e., the image $G$ of $\rho_{E,p^\infty}$ is the full inverse image of $G_1=\rho_{E,p}(\GQ)$ under mod-$p$ reduction. 
\end{enumerate} 
Let $T=T_{s,N}\cong \Z/p^s\Z\oplus \Z/p^N\Z\subseteq E[p^N]$ be a subgroup. Then, 
\begin{enumerate} 
\item For a fixed $G_1=\rho_{E,p}(\GQ)$, the degree $[\Q(T):\Q]$ is divisible by $g_{0,1}(G_1)\cdot p^{2N-2}$, and  $[\Q(T):\Q]\geq m_{0,1}(G_1)\cdot p^{2N-2}$ if $s=0$, and $[\Q(T):\Q]$ is divisible by $g_{1,1}(G_1)\cdot p^{2N+2s-4}$, and  $[\Q(T):\Q]\geq m_{1,1}(G_1)\cdot p^{2N+2s-4}$ if $s\geq 1$, where the constants $g_{k,1}(G_1)$ and $m_{k,1}(G_1)$ are given in Tables \ref{exSZ11} and \ref{exSZ37} for $k=0,1$.
\item In general, $[\Q(T):\Q]$ is divisible by $g_{0,1}(\Q,p)\cdot p^{2N-2}$, and  $[\Q(T):\Q]\geq m_{0,1}(\Q,p)\cdot p^{2N-2}$ if $s=0$, and divisible by $g_{1,1}(\Q,p)\cdot p^{2N+2s-4}$, and $[\Q(T):\Q]\geq m_{1,1}(\Q,p)\cdot p^{2N+2s-4}$ if $s\geq 1$, where the constants $g_{k,1}(\Q,p)$ and $m_{k,1}(\Q,p)$ are given in Table \ref{exQp0} for $k=0,1$.
\end{enumerate}
\end{thm}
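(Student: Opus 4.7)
The plan is to reduce the theorem to an application of Theorem~\ref{thm-mainintro1} with $n(\Q,p) = 1$, and then to compute the constants case by case from the Sutherland--Zywina classification. The first step is to verify that, under hypotheses (A) together with either (B) or (C), the $p$-adic image $G = \rho_{E,p^\infty}(\GQ)$ coincides with the full inverse image of $G_1$ under the reduction map $\GL(2,\Z_p) \to \GL(2,\Z/p\Z)$. Under (C) this is literally the hypothesis. Under (B), the equality follows from the classification of maximal subgroups of $\GL(2,\Z_p)$ together with the known fact that, for non-CM $E/\Q$, the primes at which the $p$-adic image can strictly refine the full preimage of its mod-$p$ reduction all lie in $S$ (the obstructions coming either from sporadic exceptional mod-$p$ images or from non-trivial index within the pro-$p$ kernel of reduction, both phenomena confined to the listed primes by Serre's open image theorem and subsequent refinements). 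Once $n(\Q,p) = 1$ is established, Theorem~\ref{thm-mainintro1}(1) applies directly with $M = \min\{1,N\} = 1$, and the two regimes $s = 0$ (where $s < n$) and $s \geq 1$ (where $s \geq n$) of the statement match the two cases of the bound in Theorem~\ref{thm-mainintro1}.

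With this reduction in place, part~(1) becomes a purely group-theoretic computation. Since $G$ is the full preimage of $G_1$, the degree $[\Q(T_{s,1}):\Q]$ equals the index $[G_1 : \mathrm{Stab}_{G_1}(T_{s,1})]$, where $T_{s,1}$ is a subgroup of $(\Z/p\Z)^2$ that is either cyclic of order $p$ (when $s=0$) or the full group (when $s=1$). Then $g_{k,1}(G_1)$ is the gcd of these indices as $T_{k,1}$ ranges over subgroups of type $(k,1)$, and $m_{k,1}(G_1)$ is the minimum. I would enumerate the $63$ admissible images of Sutherland--Zywina, discard those excluded by (A), and tabulate the resulting values, producing Tables~\ref{exSZ11} and~\ref{exSZ37} for $p = 11$ and $p = 37$ (and similarly for the remaining relevant primes in $S$). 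For part~(2), the uniform constants are assembled as $g_{k,1}(\Q,p) = \gcd_{G_1} g_{k,1}(G_1)$ and $m_{k,1}(\Q,p) = \min_{G_1} m_{k,1}(G_1)$, the gcd and minimum running over all admissible $G_1$ at the given prime $p$, and are recorded in Table~\ref{exQp0}.

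The main obstacle is the combinatorial case analysis at the small primes in $S$, where the list of admissible mod-$p$ images is longest and includes exceptional images attached to the sporadic non-CM $j$-invariants (for instance at $p = 11, 17, 37$). Assumption~(A) is essential because it remains open whether any non-CM $E/\Q$ has mod-$p$ image contained in the normalizer of a non-split Cartan for $p \geq 19$; without (A) one cannot even enumerate the possible $G_1$, let alone compute the corresponding constants. A secondary subtlety is verifying that (B) alone, without (C), genuinely forces $G$ to be the full preimage of $G_1$: this step leans on the precise form of Serre's results and their refinement (e.g., via Theorem~\ref{thm-arai}) pinning the non-maximality locus of the $p$-adic image for non-CM $E/\Q$ to exactly the primes in $S$.
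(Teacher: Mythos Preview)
Your overall strategy matches the paper's: reduce to the situation where the $p$-adic image is defined modulo $p$, then invoke the general bounds (Theorem~\ref{thm-mainintro1}, or more precisely Corollaries~\ref{cor-full} and~\ref{cor-gsNgeneral}) with $n=1$, and finally tabulate the constants over the Sutherland--Zywina list.

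There is, however, a genuine gap in your handling of case (B). You assert that ``the primes at which the $p$-adic image can strictly refine the full preimage of its mod-$p$ reduction all lie in $S$,'' and later attribute this to Theorem~\ref{thm-arai}. Neither claim is correct as stated. Arai's theorem gives a uniform level $n(K,p)$ but says nothing about which primes force $n=1$; and for a \emph{general} non-surjective $G_1$, it is not known that the $p$-adic image must be the full preimage of $G_1$ for $p\notin S$. The paper's argument is different and cleaner: hypothesis (A) excludes case~(4) of Theorem~\ref{thm-serre2}, and hypothesis (B) (i.e.\ $p\notin S$) excludes cases~(1)--(3); hence $\rho_{E,p}$ is \emph{surjective}, so $G_1=\GL(2,\F_p)$. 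One then invokes Serre's lifting lemma (\cite{serre2}, IV-24, Lemme~3): for $p\ge 5$, surjectivity of $\rho_{E,p}$ implies surjectivity of $\rho_{E,p^\infty}$. Thus under (A)+(B) the image is all of $\GL(2,\Z_p)$, and Corollary~\ref{cor-full} gives the exact degree, matching the claimed bounds. Your proposal never explicitly establishes that $G_1$ is full under (A)+(B), and the lifting step you need is Serre's lemma, not Arai's theorem.

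A minor side remark: Tables~\ref{exSZ11} and~\ref{exSZ37} are not restricted to $p=11$ and $p=37$; together they cover all primes $p\in S$ (the split across two tables is purely typographical).
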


\begin{table}[h!]
\begin{tabular}{c|cc|cc}
\hline
$p$ & $g_{0,1}(\Q,p)$ & $m_{0,1}(\Q,p)$ & $g_{1,1}(\Q,p)$ & $m_{1,1}(\Q,p)$ \\
\hline
$2$ & 1&1&1&1\\
$3$ & 1&1&2&2\\
$5$ & 1&1&4&4\\
$7$ & 1&1&6&18\\
$11$ & 5&5&10&110\\
$13$ & 1&3&12&288\\
$17$ & 8&8&1088&1088\\
$37$ & 12&12&15984&15984\\
else & $p^2-1$&$p^2-1$&$(p-1)p(p^2-1)$&$(p-1)p(p^2-1)$\\
\hline
\end{tabular}
\caption{$g_{k,1}(\Q,p)$ and $m_{k,1}(\Q,p)$, for $k=0,1$.}\label{exQp0}
\end{table}

For example, let $E/\Q$ be the elliptic curve 
$$y^2 + xy + y = x^3 + x^2 - 3x + 1$$
and Cremona label \texttt{50b1}. This is an elliptic curve without complex multiplication, such that $B=\rho_{E,3}(\GQ)$ is conjugate to a full Borel subgroup of $\GL(2,\Z/3\Z)$. In the notation of \cite{sutherland}, the image is \texttt{3B}. Moreover, the $3$-adic image $\rho_{E,3^\infty}(\GQ)$ is the full inverse image of $B$ in $\GL(2,\Z_3)$, as we will see in Section \ref{sec-modpimage}. Thus, $ \rho_{E,3^\infty}$ is defined modulo $3$, so $E/\Q$ satisfies (A) and (C) of Theorem \ref{thm-mainintro3}. It is easy to see that $g_{0,1}(\texttt{3B})=m_{0,1}(\texttt{3B})=2$ and $g_{1,1}(\texttt{3B})=m_{1,1}(\texttt{3B})=12$. Therefore, for every $0\leq s \leq N$, and every subgroup $T_{s,N}\cong \Z/3^s\Z\oplus \Z/3^N\Z \subseteq E[3^N]$, we have that 
$[\Q(T_{s,N}):\Q]$ is divisible by
$$\begin{cases}
2^{2N-1} &, \text{ if } s=0,\\
3\cdot 2^{2N+2s-2} &, \text{ if } s\geq 1,
\end{cases}$$
and these divisibility bounds are best possible, in the sense that, for each pair of $s,N$, there is a choice of $T'_{s,N}\subseteq E[3^N]$ such that $[\Q(T'_{s,N}):\Q]$ is equal to the bound.

\begin{remark}
All our results in this article are for elliptic curves without complex multiplication. In the CM case, there are known divisibility bounds for the field of definition of a point of order $N$ (and for $p$-primary torsion structures when the field of definition does not contain the quadratic field of complex multiplication) given by Silverberg \cite{silverberg}, and Prasad and Yogananda \cite{prasad}. More generally, Bourdon, Clark, and Pollack \cite{bourdon}, have recently shown divisibility bounds for $p$-primary torsion structures, similar to those of our Theorem \ref{thm-mainintro3}. 
\end{remark}

The article is organized as follows. In Section \ref{sec-prior} we record previous results in the literature related to our work, which will be used in Section \ref{sec-proofs} to prove our main Theorem \ref{thm-bounds}. Then, we will deduce a number of corollaries, which include Theorems \ref{thm-mainintro1}, \ref{thm-main1}, and \ref{thm-main2} from the introduction. In particular, Theorem \ref{thm-mainintro1} follows from Corollaries \ref{cor-full} and \ref{cor-gsNgeneral}, while \ref{thm-main1}, and \ref{thm-main2} follow from Corollary \ref{cor-gsN} (and they are equivalent to Corollary \ref{cor-Q}). Theorem \ref{thm-mainintro3} will be shown at the very end of Section \ref{sec-proofs}. Finally, in Section \ref{sec-examples}, we show examples of families of elliptic curves that achieve the minimal degrees of definition.

\begin{ack}
We would like to thank Andrew Sutherland and David Zywina for several helpful references and remarks. The second author would like to thank the Universidad Aut\'onoma de Madrid, where much of this work was completed during a sabbatical visit, for its hospitality. Finally, we would like to thank the editors and the referee(s) for a very efficient and helpful editorial and review process.
\end{ack}

\section{Prior Results in the Literature}\label{sec-prior}

The first three results that we quote describe the possible torsion subgroups for elliptic curves over $\Q$ or quadratic fields.

\begin{thm}[Mazur, \cite{mazur1}, Theorem 2]\label{thm-mazur}
Let $E/\Q$ be an elliptic curve. Then
\[
E(\Q)_\text{tors}\simeq
\begin{cases}
\Z/M\Z &\text{with}\ 1\leq M\leq 10\ \text{or}\ M=12,\ \text{or}\\
\Z/2\Z\oplus \Z/2M\Z &\text{with}\ 1\leq M\leq 4.
\end{cases}
\]
\end{thm}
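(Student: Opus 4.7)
The plan is to approach the theorem through the moduli interpretation of modular curves. A pair $(E,P)$ with $E/\Q$ an elliptic curve and $P\in E(\Q)$ a point of exact order $N$ corresponds, up to isomorphism, to a non-cuspidal $\Q$-rational point of the modular curve $X_1(N)$; similarly, a subgroup $\Z/2\Z\oplus\Z/2M\Z\subseteq E(\Q)$ corresponds to a non-cuspidal rational point of $X_1(2,2M)$. The theorem then breaks into two directions: (i) for each torsion group in the list, exhibit an explicit elliptic curve over $\Q$ realizing it, which is a finite check against well-known families (e.g.\ Kubert's tables); and (ii) for every \emph{other} prime-power order $N\in\{11,16,25,27\}$ and every prime $N=p>7$, show that $X_1(N)(\Q)$ consists only of cusps. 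A standard dévissage using the action of Galois and the classification of subgroups of $E(\overline\Q)_{\mathrm{tors}}$ of the form $\Z/m\Z\oplus\Z/n\Z$ reduces the composite cases to these prime-power statements.

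The central ideas, due to Mazur, all revolve around Jacobians of modular curves. For $N=p$ prime, one considers $J_0(p)$ together with its Hecke algebra $\mathbb{T}$ and the \emph{Eisenstein ideal} $\mathfrak{I}\subseteq\mathbb{T}$ annihilating the Eisenstein part. Mazur defines the Eisenstein quotient $J_e$ of $J_0(p)$ (the maximal quotient on which $\mathfrak{I}$ acts nilpotently) and proves, via a delicate analysis of the Galois module structure of $J_0(p)[\mathfrak{I}]$ together with congruences between modular forms, that $J_e(\Q)$ is \emph{finite}, in fact generated by the class of the cuspidal divisor $(0)-(\infty)$. This finiteness of Mordell--Weil rank is the deep input.

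Given that $J_e(\Q)$ is finite, suppose $x\in X_0(p)(\Q)$ is a hypothetical non-cuspidal rational point, and consider the morphism $f\colon X_0(p)\to J_e$, $y\mapsto [y]-[\infty]$. Then $f(x)$ is a torsion point in $J_e(\Q)$. Specializing modulo a well-chosen prime $\ell\neq p$ of good reduction, one verifies that $f$ is a \emph{formal immersion} at the cusp $\infty$ in characteristic $\ell$; this rigidifies lifts from the residue field to the $\ell$-adic integers and forces $x\equiv\infty\pmod{\ell}$ to promote to an equality $x=\infty$, contradicting the hypothesis that $x$ is non-cuspidal. The remaining prime-power cases $N=11,16,25,27$ (and the separate, more delicate case $N=13$ handled by Mazur together with Bilu--Parent--Rebolledo) are treated by variants of the same Eisenstein-quotient/formal-immersion machinery applied to $X_1(N)$ or to well-chosen Atkin--Lehner quotients, after first reducing the required rank statement to a manageable isogeny factor of the Jacobian.

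The main obstacle—really the content of Mazur's original paper—is proving that the Eisenstein quotient has finite Mordell--Weil group over $\Q$, which requires a precise identification of $J_0(p)[\mathfrak{I}]$ as a Galois module and a careful control of bad reduction at $p$. In the present article we make no attempt to reproduce this proof and simply invoke \cite{mazur1}.
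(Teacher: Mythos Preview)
The paper does not prove this theorem; it is quoted in Section~\ref{sec-prior} as a prior result from the literature, with a bare citation to \cite{mazur1}. Your proposal ultimately does the same thing---your final sentence explicitly invokes \cite{mazur1} rather than reproducing the argument---so in substance you and the paper agree.

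That said, the expository sketch you add contains a few inaccuracies worth correcting. First, the case $N=13$ was not handled ``by Mazur together with Bilu--Parent--Rebolledo''; it is due to Mazur and Tate (\emph{Points of order $13$ on elliptic curves}, Invent.\ Math.\ \textbf{22} (1973/74), 41--49). The papers \cite{bilu,bilu2} concern split Cartan images and $X_0^+(p^r)$, which is a different circle of problems. Second, your list of prime powers to exclude omits $49$. Third, and more seriously, the ``standard d\'evissage'' you invoke, reducing composite orders to the prime-power statements, does not work as stated: knowing merely that there is no rational point of order $16$, $25$, $27$, $49$, or of prime order $p\geq 11$ still permits, for instance, a point of order $72=8\cdot 9$ or $35=5\cdot 7$, neither of which is allowed by the theorem. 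The actual classification requires treating specific composite levels $N$ (e.g.\ $14,15,18,20,21,24,35,\ldots$) directly via the modular curves $X_1(N)$ and $X_1(2,2M)$; much of this casework predates Mazur and is due to Ogg, Kubert, and Ligozat.
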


\begin{thm}[Kenku, Momose, \cite{kenku2}; Kamienny, \cite{kami}]\label{thm-quadgroups}
Let $K/\Q$ be a quadratic field and let $E/K$ be an elliptic
curve. Then
\[
E(K)_\text{tors}\simeq
\begin{cases}
\Z/M\Z &\text{with}\ 1\leq M\leq 16\ \text{or}\ M=18,\ \text{or}\\
\Z/2\Z\oplus \Z/2M\Z &\text{with}\ 1\leq M\leq 6,\ \text{or}\\
\Z/3\Z \oplus \Z/3M\Z &\text{with}\ M=1\ \text{or}\ 2,\ \text{only if}\ K = \Q(\sqrt{-3}),\ \text{or}\\
\Z/4\Z \oplus \Z/4\Z &\text{only if}\ K = \Q(\sqrt{-1}).
\end{cases}
\]
\end{thm}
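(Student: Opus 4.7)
The plan is to translate the theorem into a statement about quadratic points on modular curves. A point of order $N$ on $E/K$ with $[K:\Q]=2$ corresponds to a non-cuspidal $K$-rational point on $X_1(N)$, equivalently a $\Q$-rational point on the symmetric square $X_1(N)^{(2)}$ off the $\Q$-rational diagonal; subgroups $\Z/M\Z \oplus \Z/N\Z$ are handled via the modular curves $X_1(M,N)$. The Weil pairing forces $K \supseteq \Q(\zeta_M)$ whenever $E[M] \subseteq E(K)$, which already explains why full $3$-torsion forces $K = \Q(\sqrt{-3})$ and full $4$-torsion forces $K = \Q(i)$ in the last two rows of the classification.

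The first and hardest step is Kamienny's theorem, ruling out torsion of prime order $p > 13$ over every quadratic field. Following Mazur's strategy, one studies the Abel-Jacobi map $\phi^{(2)}: X_1(p)^{(2)} \to J_1(p)$ followed by projection to a quotient $\tilde{J}$ of $J_1(p)$ whose Mordell-Weil group over $\Q$ is finite, for instance a suitable winding or Eisenstein quotient. The crux is the formal immersion lemma: for $p > 13$ and an appropriately chosen small prime $\ell$ of good reduction, the reduction mod $\ell$ of the composite $X_1(p)^{(2)} \to \tilde{J}$ should be a formal immersion at the cuspidal divisor $[(0)+(\infty)]$. Combined with finiteness of $\tilde{J}(\Q)$ and a standard specialization argument, this forces every $\Q$-rational degree-two divisor on $X_1(p)$ to be supported on cusps, ruling out non-cuspidal quadratic points of order $p$.

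With the prime orders bounded, one rules out the remaining forbidden structures case by case: composite cyclic orders such as $N = 20, 21, 22, 24, 25, 26, 27, 28, \ldots$, together with $\Z/2\Z \oplus \Z/2M\Z$ for $M \geq 7$, $\Z/3\Z \oplus \Z/3M\Z$ for $M \geq 3$, and $\Z/4\Z \oplus \Z/4M\Z$ for $M \geq 2$. Each becomes a question about quadratic points on $X_1(N)$ or $X_1(M,N)$, attacked by some combination of: an explicit affine model plus a Mordell-Weil sieve or Chabauty-type analysis on the symmetric square; a formal immersion argument adapted to Eisenstein quotients at composite level, in the spirit of Kenku-Momose; or an isogeny decomposition of $J_1(N)$ exhibiting a rank-zero subquotient with enough Hecke content. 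For the sporadic entries tied to $\Q(\sqrt{-3})$ and $\Q(i)$, one would invoke the classical Hesse-type parametrizations of $X(3)$ and $X(4)$ to enumerate all quadratic points directly and verify that the only additional structures arise over the predicted fields.

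The main obstacle is the formal immersion step inside Kamienny's theorem: showing that for every prime $p > 13$ the relevant map to $\tilde{J}$ reduces to a formal immersion at the cuspidal divisor modulo some prime of good reduction. This reduces to the non-vanishing modulo $\ell$ of a determinant built from Hecke eigenvalues acting on weight-two cusp forms, and requires either numerical verification in the low range or structural input from the Eichler-Shimura relation together with the structure of the Eisenstein ideal. A secondary obstacle is that no single uniform technique handles the medium composite levels; each surviving modular curve requires its own ad hoc treatment, which is why the original classification is spread across several papers.
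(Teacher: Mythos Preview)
The paper does not prove this theorem at all: it appears in Section~2, ``Prior Results in the Literature,'' and is simply quoted from \cite{kenku2} and \cite{kami} as background input for the paper's own results. There is therefore no proof in the paper to compare against.

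Your sketch is a faithful high-level outline of how the original Kamienny and Kenku--Momose arguments actually proceed (symmetric square of $X_1(N)$, finite Mordell--Weil quotients of $J_1(p)$, formal immersion at a cuspidal divisor, followed by case-by-case elimination at composite level). But as written it is a roadmap rather than a proof: you explicitly flag the formal immersion determinant as an unresolved obstacle, and the composite-level eliminations are described only as ``some combination of'' several possible techniques without carrying any of them out. That is appropriate for a survey paragraph, but it is not what one would present as a self-contained proof. For the purposes of this paper nothing more is needed, since the authors treat the result as a black box; if your intent was to supply a proof where the paper gives none, you would need to either fill in the formal immersion verification and the individual curve analyses, or simply cite the original sources as the paper does.
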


\begin{thm}[Najman, \cite{najman}, Theorem 2]
\label{thm-najman1} Let $E/\Q$ be an elliptic curve defined over $\Q$, and let $F$ be a quadratic number field. Then,
\[
E(F)_\text{tors}\simeq
\begin{cases}
\Z/M\Z &\text{with}\ 1\leq M\leq 10, \text{ or } M=12,15,\ \text{or}\ 16, \text{ or}\\
\Z/2\Z\oplus \Z/2M\Z &\text{with}\ 1\leq M\leq 6,\ \text{or}\\
\Z/3\Z \oplus \Z/3M\Z &\text{with}\ M=1\ \text{or}\ 2,\ \text{only if}\ F = \Q(\sqrt{-3}),\ \text{or}\\
\Z/4\Z \oplus \Z/4\Z &\text{only if}\ F = \Q(\sqrt{-1}).
\end{cases}
\]

\end{thm}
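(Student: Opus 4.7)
My plan is to combine the Kenku--Momose--Kamienny classification (Theorem \ref{thm-quadgroups}) with Mazur's theorem (Theorem \ref{thm-mazur}) applied simultaneously to $E/\Q$ and to its quadratic twists. Let $F = \Q(\sqrt{d})$, let $\sigma$ generate $\Gal(F/\Q)$, and let $E^d/\Q$ be the quadratic twist of $E$ by $d$. Fix an isomorphism $\phi \colon E^d \to E$ defined over $F$ satisfying $\sigma(\phi) = -\phi$, so that $\phi$ identifies $E^d(\Q)$ with the $(-1)$-eigenspace $E(F)^{\sigma = -1}$. For any $P \in E(F)$, the element $P + \sigma(P)$ lies in $E(\Q)$ while $P - \sigma(P)$ corresponds via $\phi$ to a point of $E^d(\Q)$, and $2P = (P + \sigma(P)) + (P - \sigma(P))$.

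Theorem \ref{thm-quadgroups} already pins down $E(F)_\text{tors}$ to one of the groups on the K--M--K list, so the task reduces to eliminating the cyclic groups $\Z/n\Z$ for $n \in \{11, 13, 14, 18\}$ (the only members of the K--M--K list that are absent from the target list). For each such $n$, a direct computation shows that $(\Z/n\Z)^\times[2] = \{\pm 1\}$. Hence if $P \in E(F)$ has exact order $n$, then $\sigma$ restricts to $\langle P \rangle \cong \Z/n\Z$ as an involutive automorphism, and therefore $\sigma(P) = P$ or $\sigma(P) = -P$. In the first case $P \in E(\Q)$ has order $n$, contradicting Mazur. In the second case $\phi^{-1}(P) \in E^d(\Q)$ has order $n$, contradicting Mazur applied to $E^d/\Q$. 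This uniform argument handles the elimination step cleanly.

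For the existence half I would exhibit an elliptic curve $E/\Q$ and a quadratic $F$ realizing each remaining group. The groups already occurring over $\Q$ persist, by Mazur, so the task is to produce $\Q$-defined examples for the genuinely new torsion types $\Z/15\Z$, $\Z/16\Z$, $\Z/2\Z \oplus \Z/10\Z$, $\Z/2\Z \oplus \Z/12\Z$, together with the CM-specific $\Z/3\Z \oplus \Z/3M\Z$ over $\Q(\sqrt{-3})$ and $\Z/4\Z \oplus \Z/4\Z$ over $\Q(i)$. Each of these amounts to producing a quadratic point on the appropriate modular curve $X_1(N)$ (or its product-level analog) that lies over a rational point of $X_0(N)$, so that the underlying curve $E$ is $\Q$-rational while the torsion structure appears only after base change to $F$. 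The restrictions to $\Q(\sqrt{-3})$ and $\Q(i)$ in the last two families are automatic from the Weil pairing, which forces $\mu_N \subseteq F$ whenever $(\Z/N\Z)^2 \hookrightarrow E(F)$.

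I expect the main obstacle to lie in the existence half, specifically for $\Z/15\Z$ and $\Z/16\Z$: the relevant modular curves have positive genus, so their quadratic points are finite in number and must be located explicitly, with the additional constraint that the image on the $j$-line is rational and the underlying curve admits a $\Q$-rational model. By contrast, the elimination step via the $\sigma$-action on $\langle P \rangle$ is short and uniform, and the bookkeeping for the composite structures reduces routinely to combining the twist decomposition $2P = (P + \sigma(P)) + (P - \sigma(P))$ with Mazur's theorem applied to both $E$ and $E^d$.
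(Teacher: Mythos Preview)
The paper does not prove this statement; Theorem~\ref{thm-najman1} is quoted in Section~\ref{sec-prior} as a prior result of Najman \cite{najman} and is used only as background, so there is no proof in the paper to compare your proposal against.

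That said, your elimination argument is essentially the standard one and is correct: for each $n\in\{11,13,14,18\}$ one has $(\Z/n\Z)^\times[2]=\{\pm 1\}$, so the nontrivial Galois involution on a cyclic $E(F)_\text{tors}\cong\Z/n\Z$ forces $\sigma(P)=\pm P$, and then Mazur applied to $E$ or to the twist $E^d$ gives the contradiction. Your identification of the existence half as the substantive part, especially locating suitable quadratic points on $X_1(15)$ and $X_1(16)$ lying over rational $j$-invariants, is also accurate; this is where Najman's paper does the real work, and your outline does not yet carry it out.
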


As we mentioned in the introduction, our results depend on the image of the $p$-adic Galois representation $\rho_{E,p^\infty}:\GK\to \GL(2,\Z_p)$ associated to the natural action of Galois on the Tate module $T_p(E)$ with respect to a fixed $\Z_p$-basis. In \cite{serre1}, Serre showed that the image of $\rho_{E,p^\infty}$ is as large as possible for all but finitely many prime numbers, as long as $E/K$ does not have complex multiplication.
\begin{thm}[Serre, \cite{serre1}]\label{thm-serre}
	Let $K$ be a number field, and let $E/K$ be an elliptic curve without complex multiplication. Then, $\rho_{E,p^\infty}(\GK)$ is an open subgroup of $\GL(2,\Z_p)$, and $\rho_{E,p^\infty}$ is surjective for all but finitely many primes. 
\end{thm}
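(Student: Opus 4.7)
The plan is to establish the two conclusions separately. For openness of the image at a fixed $p$, I would pass to Lie algebras and show that the $\Q_p$-Lie algebra of the image is the full $\mathfrak{gl}_2(\Q_p)$. For surjectivity at almost all primes, I would first reduce to mod-$p$ surjectivity and then rule out, for all but finitely many $p$, each of the possible proper subgroups of $\GL(2,\F_p)$ appearing in Dickson's classification.

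For the openness claim, the image $G := \rho_{E,p^\infty}(\GK)$ is closed in $\GL(2,\Z_p)$, hence is a $p$-adic Lie subgroup with an associated Lie algebra $\mathfrak{g} \subseteq \mathfrak{gl}_2(\Q_p)$, and $G$ is open if and only if $\mathfrak{g} = \mathfrak{gl}_2(\Q_p)$. Since $\det\circ\rho_{E,p^\infty}$ is the $p$-adic cyclotomic character, the trace map $\mathfrak{g}\to\Q_p$ is surjective, so it suffices to prove $\mathfrak{g}\cap\mathfrak{sl}_2=\mathfrak{sl}_2$. Hodge--Tate theory at a place of $K$ above $p$ — exploiting that $V_p(E):=T_p(E)\otimes_{\Z_p}\Q_p$ has distinct Hodge--Tate weights $0$ and $1$ — produces a non-trivial semisimple element in $\mathfrak{g}$, which excludes Borel and unipotent subalgebras. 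The only remaining case to rule out is $\mathfrak{g}$ sitting inside a Cartan subalgebra (or, equivalently up to finite index, the normalizer of one). In that situation the commutant of $G$ inside $\mathrm{End}(V_p(E))$ would strictly contain $\Q_p$, and then the Tate/Faltings isogeny theorem would force $\mathrm{End}_K(E)\otimes\Q_p$ to be strictly larger than $\Q_p$, contradicting the non-CM hypothesis.

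For the second part, I would first observe that for $p\geq 5$, surjectivity of the mod-$p$ representation $\overline{\rho}_{E,p}$ already implies surjectivity of $\rho_{E,p^\infty}$; this is the standard group-theoretic lemma that uses that $\SL(2,\F_p)$ is perfect for $p\geq 5$ and that the kernel of reduction is a pro-$p$ group generated topologically by commutators of lifts of mod-$p$ generators. It then suffices to show that for all but finitely many $p$ the image cannot be contained in a proper subgroup of $\GL(2,\F_p)$. By Dickson's classification, such a subgroup must lie in a Borel subgroup, in the normalizer of a split Cartan, in the normalizer of a non-split Cartan, or be exceptional (projective image isomorphic to $A_4$, $S_4$, or $A_5$). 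The exceptional case contributes only finitely many $p$ since the group orders are uniformly bounded. Containment in a Borel corresponds to a $K$-rational cyclic $p$-isogeny from $E$, which is ruled out for $p$ large via finiteness of non-cuspidal $K$-points on $X_0(p)$ (Mazur over $\Q$, Merel in general). The two Cartan cases are handled analogously using the modular curves $X_{\mathrm{sp}}^+(p)$ and $X_{\mathrm{ns}}^+(p)$, together with the fact that only finitely many $p$ can yield a non-CM $K$-rational point on either.

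The main obstacle is the normalizer-of-non-split-Cartan case, which is historically the most delicate part of Serre's argument and was not given a fully uniform treatment over $\Q$ until significantly later work (notably Bilu--Parent--Rebolledo). By contrast, once the Hodge--Tate input is in hand, the openness statement reduces to essentially linear algebra on $\mathfrak{gl}_2$. For the purposes of this paper the theorem is used only as a black box, so the citation to \cite{serre1} is what we actually rely on, but the outline above indicates the structure of the argument.
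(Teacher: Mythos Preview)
The paper does not prove this theorem; it is quoted from Serre \cite{serre1} and used as a black box, as you yourself acknowledge in your final paragraph. There is therefore no in-paper argument to compare against, and what follows is an assessment of your sketch on its own terms.

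Your outline for openness is essentially correct and close in spirit to Serre's own approach. The appeal to Faltings' isogeny theorem to exclude the Cartan-type Lie algebra is anachronistic (Serre's paper is 1972, Faltings is 1983), but it is a valid shortcut today; Serre used a different finiteness argument at that step.

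The surjectivity half, however, contains a genuine conflation. What you sketch is the strategy for Serre's \emph{uniformity} question---a bound on $p$ independent of $E$---rather than for the theorem as stated, where the finite exceptional set of primes is allowed to depend on the fixed curve $E$. Appealing to ``finiteness of non-cuspidal $K$-points on $X_0(p)$'' and to rational points on $X_{\mathrm{sp}}^+(p)$, $X_{\mathrm{ns}}^+(p)$ does not do what you need: over a general number field $K$ those statements are either unknown or insufficient (finitely many $K$-points does not mean only cusps and CM points), Merel's theorem concerns torsion rather than isogenies, and in any case Serre's 1972 proof does not go through these modular curves. He instead rules out each Dickson case for infinitely many $p$ by arguments with Frobenius traces, the auxiliary quadratic characters attached to Cartan normalizers, and Shafarevich-type finiteness. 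Your historical remark is also inverted: Bilu--Parent and Bilu--Parent--Rebolledo treat the \emph{split} Cartan case of the uniformity problem over $\Q$; the \emph{non-split} Cartan case is precisely the one that remains open, as the introduction of the present paper notes.
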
 

Serre's open image theorem implies that there is a number $n=n(E/K,p)$ such that  $1+p^nM_2(\Z_p)\subseteq \rho_{E,p^\infty}(\GK)$. The following result shows that $n(E/K,p)$ can be made independent of the curve.

\begin{thm}[Arai, \cite{arai}]\label{thm-arai} Let $K$ be a number field, and let $p$ be a prime. Then, there exists an integer $n=n(K,p)\geq 1$ depending on $K$ and $p$ such that for any elliptic curve $E$ over $K$ with no complex multiplication, we have $1+p^nM_2(\Z_p)\subseteq \rho_{E,p^\infty}(\GK)$. In other words, $\rho_{E,p^\infty}(\GK)$ is the full inverse image of $\rho_{E,p^n}(\GK)$ under reduction modulo $p^n$.
\end{thm}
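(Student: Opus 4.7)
The plan is to combine Serre's open image theorem (Theorem \ref{thm-serre}), which handles each elliptic curve individually, with a uniform moduli-theoretic argument via Faltings' theorem. By Theorem \ref{thm-serre}, for any non-CM $E/K$ there is some integer $n(E)$ with $1+p^{n(E)}M_2(\Z_p)\subseteq \rho_{E,p^\infty}(\GK)$, so the real content of the assertion is the uniformity of $n(E)$ as $E$ varies over non-CM curves $E/K$. It would suffice to show that, for some $n_0=n_0(K,p)$, only finitely many non-CM $j$-invariants in $K$ arise from elliptic curves $E/K$ whose mod-$p^{n_0}$ image $\rho_{E,p^{n_0}}(\GK)$ is a proper subgroup of $\GL(2,\Z/p^{n_0}\Z)$; then one enlarges $n_0$ by an amount depending on these finitely many exceptional $j$-invariants (and their twists) to absorb them.

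For the finiteness step I would use modular curves. For each conjugacy class of subgroup $H\subseteq \GL(2,\Z/p^{n_0}\Z)$ containing $-I$ and having surjective determinant, there is a modular curve $X_H/K$ whose non-cuspidal $K$-points parameterize elliptic curves $E/K$ carrying an $H$-level structure at $p^{n_0}$. A non-CM $E/K$ with $\rho_{E,p^{n_0}}(\GK)\subseteq H$ therefore yields a non-cuspidal, non-CM $K$-rational point of $X_H$. The key geometric input is that, as $n_0$ grows, the genera of the $X_H$ grow uniformly over proper subgroups $H$ of $\GL(2,\Z/p^{n_0}\Z)$, so one can choose $n_0$ large enough that every such $X_H$ has genus at least $2$. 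Faltings' theorem then gives $\#X_H(K)<\infty$ for each such $H$, and since there are only finitely many conjugacy classes of $H$, only finitely many non-CM $j$-invariants in $K$ can arise.

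To finish I would handle the exceptional $j$-invariants one at a time. For each non-CM $j_0\in K$ flagged above, pick any $E_{j_0}/K$ with $j(E_{j_0})=j_0$ and let $n(E_{j_0})$ be given by Theorem \ref{thm-serre}. Every other elliptic curve with this $j$-invariant is a twist parameterized by $\Hc^1(\GK,\Aut(E_{j_0}))$, and the $p$-adic image of any twist differs from that of $E_{j_0}$ by a character whose order divides $\#\Aut(E_{j_0})\in\{2,4,6\}$. Hence a single integer (depending only on $j_0$, $K$, $p$) governs all twists simultaneously. Taking $n(K,p)$ to be the maximum of $n_0$ and these finitely many individual values yields the desired uniform integer.

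The main obstacle is the genus growth used in the middle step: one needs to know that, for $n_0$ sufficiently large, \emph{every} proper subgroup $H\subsetneq \GL(2,\Z/p^{n_0}\Z)$ containing $-I$ and with surjective determinant yields a modular curve $X_H$ of genus at least $2$. This requires a careful analysis of the genus formula in terms of index, number of cusps, and ramification at elliptic points, together with enough control on the subgroup lattice of $\GL(2,\Z/p^{n_0}\Z)$ to rule out chains of low-genus quotients occurring at all levels. The bookkeeping around twists and CM points in the final step is comparatively harmless, since CM $j$-invariants in $K$ already form a finite set (and can be treated curve-by-curve using the analogous open image results for CM curves if one wishes to extend the statement).
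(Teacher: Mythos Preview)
The paper does not prove Theorem~\ref{thm-arai}; it is quoted from Arai's paper \cite{arai} as a prior result in the literature, so there is no ``paper's own proof'' to compare against. I will therefore only comment on the soundness of your proposed argument.

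Your argument has a genuine gap at the very first reduction. You claim it would suffice to show that, for some $n_0$, only finitely many non-CM $j$-invariants in $K$ come from curves with $\rho_{E,p^{n_0}}(\GK)\subsetneq \GL(2,\Z/p^{n_0}\Z)$. But that statement is simply false, and it is also not what Arai's theorem asserts. For example, take $K=\Q$ and $p\in\{2,3,5,7,13\}$: the modular curve $X_0(p)$ has genus $0$, so there are infinitely many $j$-invariants of elliptic curves over $\Q$ admitting a rational $p$-isogeny. For every such curve and every $n_0\geq 1$, the image $\rho_{E,p^{n_0}}(\GQ)$ lies in (the inverse image of) a Borel subgroup and is therefore a proper subgroup of $\GL(2,\Z/p^{n_0}\Z)$. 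Your finiteness claim thus fails for every $n_0$, and the asserted ``uniform genus growth over all proper subgroups $H\subsetneq \GL(2,\Z/p^{n_0}\Z)$'' is wrong for exactly this reason: if $H$ is the full inverse image in $\GL(2,\Z/p^{n_0}\Z)$ of a fixed subgroup $H_1\subseteq \GL(2,\Z/p\Z)$, then $X_H\cong X_{H_1}$ has genus independent of $n_0$.

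The conclusion of Arai's theorem is not that the mod-$p^n$ image is full, but that the $p$-adic image \emph{contains} the principal congruence subgroup $1+p^nM_2(\Z_p)$; equivalently (cf.\ Lemma~\ref{lem-arai} for $p>2$), that $\rho_{E,p^{n+1}}(\GK)$ contains $1+p^nM_2(\Z/p^{n+1}\Z)$. A correct moduli-theoretic approach would have to work with the subgroups $H\subseteq \GL(2,\Z/p^{n+1}\Z)$ that fail this latter containment, and to show that the associated modular curves $X_H$ eventually all have genus $\geq 2$. This is a much more delicate statement than the one you wrote down, because one must rule out precisely those $H$ which are ``deep but thin'' (not full inverse images of their mod-$p$ reductions). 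Arai's actual proof in \cite{arai} proceeds along different lines, via effective isogeny bounds; if you wish to pursue a Faltings/modular-curves strategy, you would first need to reformulate the target condition correctly and then establish the required genus growth for that corrected family of subgroups.
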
 

As a corollary of Arai's theorem, the image of $\rho_{E,p^\infty}$ is determined modulo $p^{n(K,p)}$, and so, the number of possible $p$-adic images (up to conjugation) is bounded above by the number of subgroups of $\GL(2,\Z/p^n\Z)$. Thus, we obtain that there are only finitely many possible $p$-adic images of $\rho_{E,p^\infty}$ over $K$ up to conjugation.

\begin{cor}\label{cor-arai}
Let $K$ be a number field, and let $p$ be a prime. Then, there is only a finite number $a(K,p)\geq 1$ of possibilities (up to conjugation) for the image of $\rho_{E,p^\infty}:\GK\to \GL(2,\Z_p)$, for any elliptic curve $E/K$ without complex multiplication. In other words, there are subgroups $G^i$ of $\GL(2,\Z_p)$, for $1\leq i \leq a(K,p)$, such that for any elliptic curve $E/K$ there is a number $j$ such that  $\rho_{E,p^\infty}(\GK)$ is a conjugate of $G^j$ in $\GL(2,\Z_p)$.
\end{cor}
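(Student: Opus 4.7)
The plan is to bootstrap directly from Arai's theorem (Theorem \ref{thm-arai}), with the finiteness of $\GL(2,\Z/p^n\Z)$ doing the rest of the work. First I would fix the integer $n = n(K,p)$ supplied by Arai, so that for every elliptic curve $E/K$ without CM the image $G := \rho_{E,p^\infty}(\GK)$ contains the principal congruence kernel $1 + p^n M_2(\Z_p)$ of the mod-$p^n$ reduction map $\pi_n \colon \GL(2,\Z_p) \to \GL(2,\Z/p^n\Z)$. The key observation is then that any such subgroup $G$ is uniquely determined by its mod-$p^n$ image, because $G = \pi_n^{-1}(\pi_n(G))$ as soon as $G \supseteq \ker(\pi_n)$.

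Next I would use that $\GL(2,\Z/p^n\Z)$ is a finite group, so it has only finitely many subgroups, and in particular only finitely many conjugacy classes of subgroups. Let me choose representatives $\bar G^1, \ldots, \bar G^a$ of these conjugacy classes and set $G^i := \pi_n^{-1}(\bar G^i) \subseteq \GL(2,\Z_p)$. By the preceding paragraph, every $p$-adic image $\rho_{E,p^\infty}(\GK)$ coincides with $\pi_n^{-1}(\bar G)$ for some subgroup $\bar G$ of $\GL(2,\Z/p^n\Z)$, hence is conjugate \emph{modulo $p^n$} to some $\bar G^j$.

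The one remaining point is to check that conjugacy mod $p^n$ lifts to conjugacy in $\GL(2,\Z_p)$. Given $\bar\sigma \in \GL(2,\Z/p^n\Z)$ with $\bar\sigma\, \bar G\, \bar\sigma^{-1} = \bar G^j$, I would lift $\bar\sigma$ to any $\sigma \in \GL(2,\Z_p)$ with $\pi_n(\sigma)=\bar\sigma$; since taking full preimages under a group homomorphism commutes with conjugation by any lift of $\bar\sigma$ (using that the kernel $1+p^n M_2(\Z_p)$ is normal), one gets $\sigma\,\pi_n^{-1}(\bar G)\,\sigma^{-1} = \pi_n^{-1}(\bar G^j)$, i.e. $\sigma G \sigma^{-1} = G^j$. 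This finishes the proof, with $a(K,p)$ bounded by the number of conjugacy classes of subgroups of $\GL(2,\Z/p^n\Z)$. I do not expect any real obstacle here: the corollary is a formal consequence of Arai's uniform bound together with the finiteness of $\GL(2,\Z/p^n\Z)$, and the only thing worth spelling out carefully is the lift-of-conjugation step in the last paragraph.
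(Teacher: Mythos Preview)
Your proof is correct and follows exactly the same approach as the paper, which simply observes that by Arai's theorem the image is determined by its mod $p^{n(K,p)}$ reduction and that $\GL(2,\Z/p^n\Z)$ has only finitely many subgroups. You have just spelled out more carefully the step that conjugacy modulo $p^n$ lifts to conjugacy in $\GL(2,\Z_p)$, which the paper leaves implicit.
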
 

Rouse and Zureick-Brown have classified all the possible $2$-adic images of $\rho_{E,2}:\GQ\to \GL(2,\Z_2)$, and have shown that $n(\Q,2)=5$ and $a(\Q,2)=1208$, with notation as in Arai's theorem and its corollary. 

\begin{thm}[Rouse, Zureick-Brown, \cite{rouse}]\label{thm-rzb} Let $E$ be an elliptic curve over $\Q$ without complex multiplication. Then, there are exactly $1208$ possibilities for the $2$-adic image $\rho_{E,2^\infty}(\GQ)$, up to conjugacy in $\GL(2,\Z_2)$. Moreover:
\begin{enumerate}
\item The index of $\rho_{E,2^\infty}(\Gal(\overline{\Q}/\Q))$ in $\GL(2,\Z_2)$ divides $64$ or $96$.
\item The image $\rho_{E,2^\infty}(\GQ)$ is the full inverse image of $\rho_{E,2^5}(\GQ)$ under reduction modulo $2^5$.
\end{enumerate} 
\end{thm}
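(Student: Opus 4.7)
The plan is to enumerate all open subgroups $H\subseteq \GL(2,\Z_2)$ of finite index that can arise as the image of $\rho_{E,2^\infty}$ for some non-CM elliptic curve $E/\Q$, and then to verify that the list comprises exactly $1208$ conjugacy classes satisfying the index bound in part (1). First, I would prove the level bound in part (2): that $\rho_{E,2^\infty}(\GQ)$ is the full inverse image under reduction mod $2^5$ of $\rho_{E,2^5}(\GQ)$. By Serre's open image theorem (Theorem \ref{thm-serre}), the image contains $1+2^nM_2(\Z_2)$ for some $n$, and by analyzing the Lie-algebra-type filtration $\{1+2^k M_2(\Z_2)\}_k$ together with commutator relations, one shows that once the mod-$2^5$ image is specified, enough transvections lift to force the image to contain $1+2^5M_2(\Z_2)$. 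The borderline subgroups (images contained in a Borel, in the normalizer of a split or non-split Cartan, or in an exceptional subgroup) must be treated separately, using the explicit structure of their pro-$2$ tails.

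Second, I would enumerate candidate subgroups of $\GL(2,\Z/32\Z)$ satisfying the necessary arithmetic constraints: the determinant must surject onto $(\Z/32\Z)^\times$ by the Weil pairing, and the image must contain an element conjugate to complex conjugation (trace $0$, determinant $-1$). This reduces to a finite group-theoretic enumeration inside the $2$-group $\GL(2,\Z/32\Z)$, best carried out computationally by walking down the subgroup lattice, starting from $\GL(2,\Z/32\Z)$ itself and repeatedly passing to maximal subgroups that still satisfy the constraints. The index bound (dividing $64$ or $96$) is imposed as a filter during this enumeration; the factor $3$ enters only through the mod-$2$ reduction.

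Third, for each candidate subgroup $H$ I would construct the modular curve $X_H/\Q$, whose non-cuspidal non-CM rational points classify elliptic curves $E/\Q$ with $\rho_{E,2^\infty}(\GQ)$ conjugate into $H$, and determine $X_H(\Q)$. For genus $0$, a rational point produces a one-parameter family of curves over the $j$-line and $H$ is placed on the list; for genus $1$, I would compute the Jacobian and its Mordell--Weil group explicitly, distinguishing cuspidal, CM, and non-CM points; for genus $\geq 2$, I would apply Chabauty--Coleman when the Mordell--Weil rank is strictly less than the genus, the Mordell--Weil sieve otherwise, or exploit finite maps to modular curves of lower level whose rational points are already classified. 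An essential organizational tool is the lattice of coverings $X_{H'}\to X_H$ for $H'\subseteq H$, which allows rational points on higher-level covers to be recovered from those on lower-level ones.

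The main obstacle is the rational-point determination on the modular curves of genus $\geq 2$: there is no uniform algorithm, so this step requires substantial case-by-case work, and in a handful of stubborn cases one must resort to étale descent or to $2$-cover-based methods to control the Mordell--Weil sieve. Completeness (and the exact count $1208$, as well as the index divisibility by $64$ or $96$) then follows from bookkeeping: each of the enumerated subgroups is either realized by at least one rational point on $X_H$ and added to the list, or ruled out by showing that $X_H(\Q)$ consists only of cusps and CM points. Finally, the level-five result from the first step guarantees that this enumeration at level $32$ in fact sees every possible $2$-adic image.
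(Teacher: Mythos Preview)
The paper does not prove this theorem at all: it is quoted in Section~\ref{sec-prior} as a prior result from the literature, attributed to Rouse and Zureick-Brown~\cite{rouse}, and is used as a black box in the subsequent arguments (in particular to justify $n(\Q,2)=5$ and $a(\Q,2)=1208$ in Corollary~\ref{cor-gsN}). There is therefore no proof in this paper to compare your proposal against.

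For what it is worth, your outline is a reasonable high-level sketch of the strategy actually employed in~\cite{rouse}: enumerate admissible subgroups at a bounded level, construct the associated modular curves $X_H$, and determine their rational points by genus-dependent methods (parametrization, Mordell--Weil computations, Chabauty, sieving). One point to be careful about is your first step: the level-$2^5$ bound in part~(2) is not established in~\cite{rouse} by an a priori Lie-theoretic argument of the kind you describe, but rather emerges \emph{a posteriori} from the classification itself---the enumeration is carried out level by level, and one discovers that no new images appear beyond level~$32$. So the logical flow in the original proof is somewhat different from what you propose, though the ingredients are largely the same.
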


\begin{remark}\label{rem-rzb}
The $1208$ distinct possibilities for $2$-adic images that are found in \cite{rouse} are described in a few text files that can be found on the website listed in the references of this article. Each image has a label \texttt{Xk} or \texttt{Xkt} where \texttt{k} is a number and \texttt{t} is a letter (e.g., \texttt{X2} or \texttt{X58i}). In particular, the files  \texttt{curvelist1.txt} and \texttt{curvelist2.txt} are lists of examples of elliptic curves with each type of image, and the files \texttt{gl2data.gz} and \texttt{gl2finedata.gz} contain the descriptions of each image. The curves with each type of image come in $1$-parameter families which are given in the file \texttt{finemodels.tar.gz}. See the article and website \cite{rouse} for more info on how to interpret the files and notations. In addition, the website \cite{rouse} contains links to individual websites with data about each $2$-adic image. For instance, \href{http://users.wfu.edu/rouseja/2adic/X441.html}{http://users.wfu.edu/rouseja/2adic/X441.html} is the site for the image \texttt{X441}.
\end{remark}

For $p>2$, we know that the image of $\rho_{E,p}:\GQ\to\GL(E[p])$ is contained in one of the maximal subgroups of $\GL(E[p])\cong \GL(2,\Z/p\Z)$. The best results known are summarized in the following result.

\begin{thm}[Serre, \cite{serre1}, \S 2; \cite{serre2}, Lemme 18;  Mazur, \cite{mazur1}; Bilu, Parent, Rebolledo \cite{bilu}, \cite{bilu2}]\label{thm-serre2}
Let $E/\Q$ be an elliptic curve without CM. Let $G$ be the image of $\rho_{E,p}$, and suppose $G\neq \GL(E[p])$. Then one of the following possibilities holds:
\begin{enumerate}
\item $G$ is contained in a Borel subgroup of $\GL(E[p])$, and $p=2,3,5,7,11,13,17$, or $37$; or
\item The projective image of $G$ in $\PGL(E[p])$ is isomorphic to $A_4$, $S_4$ or $A_5$, where $S_n$ is the symmetric group and $A_n$ the alternating group, and $p\leq 13$; or
\item $G$ is contained in the normalizer of a split Cartan subgroup of $\GL(E[p])$ and $p\leq 13$, with $p\neq 11$; or 
\item $G$ is contained in the normalizer of a non-split Cartan subgroup of $\GL(E[p])$. 
\end{enumerate}
\end{thm}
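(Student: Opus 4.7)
The plan is to combine Dickson's classification of subgroups of $\GL_2(\F_p)$ with a separate modular curve analysis for each surviving class. First, I would invoke the classical group-theoretic trichotomy: any proper subgroup $G$ of $\GL_2(\F_p)$ either (i) lies inside a Borel subgroup, (ii) lies inside the normalizer of a split or non-split Cartan subgroup, or (iii) has projective image in $\PGL_2(\F_p)$ isomorphic to one of the exceptional groups $A_4$, $S_4$, or $A_5$. This is Dickson's theorem, and Serre's exposition in \S 2 of \cite{serre1} sharpens it to exactly this dichotomy under the hypotheses that arise for $G=\rho_{E,p}(\GQ)$; in particular, since $\det \rho_{E,p}$ is the mod-$p$ cyclotomic character, $\det G$ surjects onto $(\Z/p\Z)^\times$, which rules out certain small proper subgroups and streamlines the analysis.

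Next, I would restrict $p$ in each of the four cases by appealing to known results on modular curves. In the Borel case, a Galois-stable line in $E[p]$ corresponds to a rational cyclic $p$-isogeny, i.e.\ a non-cuspidal rational point on $X_0(p)$. By Mazur's isogeny theorem \cite{mazur1}, for a non-CM curve such a point exists only when $p\in\{2,3,5,7,11,13,17,37\}$, which gives case (1). In the exceptional case, Serre's argument (\cite{serre1}, \S 2.6, and \cite{serre2}, Lemme 18) bounds the orders of elements of $G$ lifted from $A_4,S_4,A_5$, and, by comparing with the characteristic polynomials of Frobenius at small primes of good reduction, forces $p\leq 13$, giving case (2). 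The split Cartan case corresponds to rational points on $X_{\text{sp}}^+(p)$; small primes are handled directly since the curve has low genus, and Bilu-Parent \cite{bilu} together with Bilu-Parent-Rebolledo \cite{bilu2} show that for $p\geq 11$ with $p\neq 13$ the only rational points are cusps and CM points, yielding case (3) with $p\leq 13$, $p\neq 11$. Case (4) admits no known restriction on $p$: the analogous statement for $X_{\text{nsp}}^+(p)$ is precisely the open part of Serre's uniformity conjecture, so the non-split Cartan possibility is left unconstrained.

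The main obstacle is that each of the three modular curve inputs above is itself a deep theorem. Mazur's isogeny theorem rests on a careful analysis of the arithmetic of $X_0(p)$ and its Jacobian; Serre's exceptional bound requires a delicate ramification and Frobenius analysis; and the Bilu-Parent(-Rebolledo) results rely on a Runge-type method on $X_{\text{sp}}^+(p)$ together with formal immersion techniques inspired by Mazur. Since Theorem \ref{thm-serre2} only collates these results, I would invoke each of them as a black box from the cited references rather than attempt to reprove them; the only genuine work is verifying that the classification of maximal subgroups applies in the form stated, which is straightforward once Dickson's theorem and the surjectivity of the determinant are in hand.
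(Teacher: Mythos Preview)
The paper does not give its own proof of this theorem: it is stated in Section~\ref{sec-prior} (``Prior Results in the Literature'') as a compilation of known results, with the proof entirely deferred to the cited works of Serre, Mazur, and Bilu--Parent(--Rebolledo). Your proposal is a correct outline of how those cited results assemble into the statement---Dickson's classification to obtain the four cases, then Mazur for the Borel restriction, Serre for the exceptional bound, and Bilu--Parent--Rebolledo for the split Cartan restriction---so your approach is exactly what the paper's citations point to.
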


Sutherland has computed the mod-$p$ image of all the non-CM elliptic curves in Cremona's tables and the Stein-Watkins database, some 140 million curves with conductors ranging up to $10^{12}$, and Zywina has described all known (and conjecturally all) proper subgroups of $\GL(2,\Z/p\Z)$ that occur as the image of $\rho_{E,p}$.

\begin{conj}[Sutherland, \cite{sutherland}; Zywina, \cite{zywina1}]\label{conj-zyw} Let $E/\Q$ be an elliptic curve without CM, and let $p$ be a prime. Then, there is a set $S_p$ formed by $s_p=|S_p|$ isomorphism types of subgroups of $\GL(2,\F_p)$, where 
	\begin{center}
		\begin{tabular}{c|ccccccccc}
			$p$ & $2$ & $3$ & $5$ & $7$ & $11$ & $13$ & $17$ & $37$ & else\\
			\hline
			$s_p$ & $3$ & $7$ & $15$ & $16$ & $7$ & $11$ & $2$ & $2$ & $0$,
		\end{tabular}
	\end{center} 
	such that if $G$ is the image of $\rho_{E,p}$, then $G$ is conjugate to one of the subgroups in $S$, or $G\cong \GL(2,\F_p)$.
\end{conj}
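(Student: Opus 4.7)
The natural approach is to take Theorem \ref{thm-serre2} as the starting point: if $G := \rho_{E,p}(\GQ)$ is a proper subgroup of $\GL(E[p])$, then $G$ falls into one of four families, namely (a) contained in a Borel, (b) with exceptional projective image $A_4$, $S_4$, or $A_5$, (c) contained in the normalizer of a split Cartan, or (d) contained in the normalizer of a non-split Cartan. The plan is then, family by family and prime by prime, to produce a complete list of the conjugacy classes of subgroups $G \leq \GL(2,\F_p)$ that are actually realized by a non-CM $E/\Q$; taking the union over the four families should reproduce the sets $S_p$, and for primes $p \notin \{2,3,5,7,11,13,17,37\}$ the conjunction of the four bounds must force $s_p = 0$.

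For family (a) the image corresponds to a non-cuspidal non-CM rational point on $X_0(p)$; by Mazur's isogeny theorem and its extensions this confines $p$ to $\{2,3,5,7,11,13,17,37\}$, and for each such $p$ the refinement to conjugacy classes reduces to controlling the diagonal characters $(\chi_1,\chi_2)$ with $\chi_1\chi_2$ equal to the mod-$p$ cyclotomic character, a finite moduli computation on $X_0(p)$, $X_1(p)$, and their twists. Families (b) and (c) are pinned to $p \leq 13$ (and $p \neq 11$ for (c)) by the already cited bounds of Serre and of Bilu--Parent--Rebolledo, and on these small primes the associated modular curves have low genus, so their rational points can be enumerated by explicit parametrizations or by Chabauty-type methods and then matched to specific subgroups of $\GL(2,\F_p)$.

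The main obstacle is family (d). For $p \leq 13$ the curve $X_{\mathrm{ns}}^+(p)$ has low genus and its rational points are known, yielding a finite explicit list of non-split-Cartan images which contribute to $s_p$ for $p \in \{5,7,11,13\}$. For $p \geq 17$, however, the conjecture demands that family (d) be empty: $s_p$ must come purely from the Borel case at $p = 17,37$ and must vanish otherwise. Proving this is precisely Serre's uniformity conjecture in the non-split Cartan direction, i.e., that $X_{\mathrm{ns}}^+(p)(\Q)$ consists only of CM points and cusps for every prime $p \geq 17$; this is known only in partial form, and is the exact reason Conjecture \ref{conj-zyw} is stated as a conjecture rather than a theorem. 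A realistic fallback---and the route actually taken in \cite{sutherland}---is to verify the statement against the roughly $140$ million curves in Cremona's tables and the Stein--Watkins database, thereby giving strong computational evidence for the upper bound $s_p$ without removing the dependence on Serre's uniformity.
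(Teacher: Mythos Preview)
Your analysis is correct, and in fact matches the paper's own treatment: Conjecture~\ref{conj-zyw} is stated in the paper as a conjecture precisely because the non-split Cartan case (your family~(d)) for $p\geq 17$ is open, and the paper does not attempt a proof. The paper simply records the list of images in Tables~\ref{exSZ11} and~\ref{exSZ37}, cites \cite{sutherland} and \cite{zywina1} for the enumeration, and notes that Sutherland's large-scale computations supply the evidence; your final paragraph identifies exactly this obstruction and this fallback.
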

The list of images in the sets $S_p$ appears in our Tables \ref{exSZ11} and \ref{exSZ37} and are described in \cite{zywina1}, and Tables 3 and 4 of \cite{sutherland}.

\section{Proofs}\label{sec-proofs}

We begin by calculating a general formula for $[K(T):K]$ in terms of the sizes of subgroups of the Galois group of $\Q(E[p^N])/\Q$.

\begin{thm}\label{thm-bounds}
Let $p$ be a prime number, let $K$ be a number field, let $E/K$ be an elliptic curve without CM, let $G=\rho_{E,p^\infty}(\GK)$, and suppose that $G$ is defined at level $p^d$, for some $d\geq 1$ (i.e., $G$ is the full inverse image of $G_d\equiv G\bmod p^d$ under reduction modulo $p^d$). Let $0\leq s\leq N$ be fixed integers, and let $T\subseteq E[p^N]$ be a subgroup isomorphic to $\Z/p^s\Z \oplus \Z/p^{N}\Z$, and write $H_T$ for the subgroup of $G_N\cong \Gal(K(E[p^N])/K)$ that fixes $K(T)$, so that if $s>0$ we have 
$$H_T=\left\{ \begin{pmatrix}
1 & 0 \\
0 & 1 \\
\end{pmatrix} + p^s\cdot \begin{pmatrix}
0 & a \\
0 & b \\
\end{pmatrix}: a,b \in \Z/p^{N}\Z  \right\}\cap G_N \subseteq \GL(2,\Z/p^{N}\Z),$$
where the chosen basis of $E[p^N]$ is $\{P,Q \}$ and $P\in T$ is a point of order $p^N$, and  if $s=0$, then  
$$H_T=\left\{ \begin{pmatrix}
1 & a \\
0 & b \\
\end{pmatrix}: a\in \Z/p^{N}\Z,\ b\in (\Z/p^N\Z)^\times  \right\}\cap G_N \subseteq \GL(2,\Z/p^{N}\Z).$$ Then, the index $[K(T):K]$ is computed as follows:
\begin{enumerate}[(i)]
\item If $0<s\leq N\leq d$, then $[K(T):K]=|G_N|/|H_T|$  where $G_N \equiv G\equiv G_d \bmod p^N$. 

\item If $s\leq d\leq N$, then 
$$[K(T):K]=\frac{|G_N|}{|H_T|}=\frac{|G_d|}{|H_{T_d}|}\cdot p^{2(N-d)},$$
where $H_{T_d}$ is the subgroup of $G_d$ that fixes $T_d=T\cap E[p^d]$.
\item If $d\leq s\leq N$, then $$[K(T):K]=\frac{|G_N|}{|H_T|}=|G_d|\cdot p^{2N+2s-4d}.$$

\end{enumerate}
\end{thm}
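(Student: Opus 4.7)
The plan is to start from the orbit-stabilizer principle applied to the Galois set $T$. Since $K(T)$ is the fixed field of the stabilizer of $T$ in $\Gal(K(E[p^N])/K)\cong G_N$, and $H_T$ has been identified as that stabilizer, one has $[K(T):K]=[G_N:H_T]=|G_N|/|H_T|$ in all three cases. The concrete matrix description of $H_T$ in the statement comes directly from writing the action on the basis $\{P,Q\}$: fixing $P$ forces $M_{11}=1$ and $M_{21}=0$, and, when $s\geq 1$, fixing the second generator $p^{N-s}Q$ of $T$ forces the congruences $M_{12}\equiv 0\pmod{p^s}$ and $M_{22}\equiv 1\pmod{p^s}$. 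So the real content of the theorem is the computation of $|G_N|$ and $|H_T|$ under the level-$p^d$ hypothesis.

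The second ingredient I will use throughout is that, since $G$ is defined at level $p^d$, the group $G_n$ is the full preimage of $G_d$ under reduction modulo $p^d$ for every $n\geq d$. Because the kernel of $\GL(2,\Z/p^n\Z)\twoheadrightarrow\GL(2,\Z/p^d\Z)$ consists of the matrices $I+p^dM$ with $M\in M_2(\Z/p^{n-d}\Z)$, all of which are units modulo $p^n$, one obtains
\[
|G_n|=|G_d|\cdot p^{4(n-d)}\qquad\text{for every }n\geq d.
\]
Case (i) then follows immediately: when $N\leq d$, $G_N$ is simply $G_d$ reduced modulo $p^N$, so orbit-stabilizer gives the formula with nothing further to check. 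Case (iii) is nearly as quick: since $s\geq d$, every matrix of the shape prescribing $H_T$ reduces to the identity modulo $p^d$, so it lies in $G_N$ automatically; a parameter count yields $|H_T|=p^{2(N-s)}$ (since $p^s a$ and $1+p^s b$ each take $p^{N-s}$ values in $\Z/p^N\Z$), and dividing $|G_N|=|G_d|\cdot p^{4(N-d)}$ by this gives $|G_d|\cdot p^{2N+2s-4d}$, as claimed.

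The substance of the argument is Case (ii), where $s\leq d\leq N$. Here I would analyze the natural reduction map $\pi: H_T\to H_{T_d}$ induced by $G_N\twoheadrightarrow G_d$, where $T_d=T\cap E[p^d]\cong\Z/p^s\Z\oplus\Z/p^d\Z$. Surjectivity of $\pi$ uses the level-$p^d$ hypothesis crucially: given $h_d\in H_{T_d}$, any coordinatewise lift of the entries of $h_d$ from $\Z/p^d\Z$ to $\Z/p^N\Z$ preserves the upper-triangular $H_T$ shape (this is the step where $s\leq d$ is used), and defines a matrix in $G_N$ because $G$ is defined at level $p^d$. The kernel of $\pi$ consists of those elements of $H_T$ reducing to $I$ modulo $p^d$; the requirement $p^s a\equiv 0\pmod{p^d}$ combined with $s\leq d$ forces $a$ to be a multiple of $p^{d-s}$ (and similarly $b$), leaving $p^{2(N-d)}$ free parameters, all of which automatically lie in $G_N$. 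Therefore $|H_T|=|H_{T_d}|\cdot p^{2(N-d)}$, and combining with $|G_N|=|G_d|\cdot p^{4(N-d)}$ yields the desired factorization $(|G_d|/|H_{T_d}|)\cdot p^{2(N-d)}$.

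No conceptual obstacle is expected: the proof is a careful counting inside explicit unipotent-type subgroups of $\GL(2,\Z/p^N\Z)$. The only delicate step is the surjectivity of $\pi$ in Case (ii), which is the one place where both hypotheses $s\leq d$ and ``$G$ defined at level $p^d$'' genuinely combine, the first to preserve the $H_T$ shape under lifting and the second to ensure the lift lies in $G_N$.
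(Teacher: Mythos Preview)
Your proposal is correct and follows essentially the same approach as the paper's own proof: both reduce to orbit--stabilizer, compute $|G_N|=|G_d|\cdot p^{4(N-d)}$ from the size of the reduction kernel, and handle the three cases by counting the stabilizer directly (Case~(i)), via the fibration $H_T\to H_{T_d}$ (Case~(ii)), and by observing that $H_T$ lies entirely in the level-$p^d$ kernel so the intersection with $G_N$ is automatic (Case~(iii)). One small slip: in Case~(ii) not \emph{every} coordinatewise lift of $h_d$ has the $H_T$ shape (the $(1,1)$ and $(2,1)$ entries must be lifted to exactly $1$ and $0$), but some lift does, which is all you need for surjectivity of $\pi$.
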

\begin{proof} 
Let $p$ be a prime number, let $E/K$ be an elliptic curve without CM, and let $\rho_{E,p^\infty}$ be the Galois representation $\GK\to \GL(2,\Z_p)$  associated to the action of Galois on the Tate module $T_p(E)$ after fixing a $\Z_p$-basis $\{P,Q\}$ of $T_p(E)$, and suppose that $G$ is defined at level $p^d$, for some $d\geq 1$ (i.e., $G$ is the full inverse image of $G_d\equiv G\bmod p^d$ under reduction modulo $p^d$). For each $m\geq 1$, let $\rho_{E,p^m}:\GK \to \GL(2,\Z/p^m\Z)$ be the Galois representations obtained as reduction of $\rho_{E,p}$ modulo $p^m$, and let $G_m$ be the image of $\rho_{E,p^m}$. Then, $G_m\cong \Gal(K(E[p^m])/K)$. 

Let $s$, $N$, and $T$ be as in the statement of the theorem. Let $H_T$ be the subgroup of $G_N\cong \Gal(K(E[p^N])/K)$ that fixes $K(T)$. In particular, 
$$[K(T):K]=\frac{|\Gal(K(E[p^N])/K)|}{|H_T|}=\frac{|G_N|}{|H_T|}.$$ Since $T$ contains a point $P_N$ of order $p^N$, we can choose $Q_N\in E[p^N]$ that forms a $\Z/p^N\Z$-basis $\{P_N,Q_N\}$ of $E[p^N]$. With respect to this basis, the subgroup $H_T$ fixing $T$ must be the subgroup of matrices in $G_N\subseteq \GL(2,\Z/p^N\Z)$ that (a) fix $P_N$, and (b) reduce to the identity modulo $p^s$ (since $E[p^s]\cong \Z/p^s\Z \oplus \Z/p^{s}\Z$ is a subgroup of $T$). Hence, if $s>0$, the group $H_T$ must be given by
$$H_T=\left\{ \begin{pmatrix}
1 & 0 \\
0 & 1 \\
\end{pmatrix} + p^s\cdot \begin{pmatrix}
0 & a \\
0 & b \\
\end{pmatrix}: a,b \in \Z/p^{N}\Z  \right\}\cap G_N \subseteq \GL(2,\Z/p^{N}\Z),$$
and if $s=0$, then  
$$H_T=\left\{ \begin{pmatrix}
1 & a \\
0 & b \\
\end{pmatrix}: a\in \Z/p^{N}\Z,\ b\in (\Z/p^N\Z)^\times  \right\}\cap G_N \subseteq \GL(2,\Z/p^{N}\Z).$$
We distinguish three cases according to whether (i) $s\leq N\leq d$, or (ii) $s\leq d \leq N$, or (iii) $d\leq s\leq N$:

\begin{enumerate}[(i)]
\item Suppose  $s\leq N\leq d$. Then, $[K(T):K]=|G_N|/|H_T|$ can be calculated directly, where $G_N \equiv G_d \bmod p^N$, and $H_T$ is defined as before.
\item Suppose  $s\leq d\leq N$. Since $N\geq d$, the subgroup $G_N$ is the full inverse image of $G_d$. In particular, since $\Ker(\GL(2,\Z/p^{d+1}\Z)\to \GL(2,\Z/p^d\Z))$ is the subgroup $$\left\langle \begin{pmatrix}
1 & p^d \\
0 & 1 \\
\end{pmatrix}, \begin{pmatrix}
1 & 0 \\
p^d & 1 \\
\end{pmatrix}, \begin{pmatrix}
1+p^d & 0 \\
0 & 1 \\
\end{pmatrix}, \begin{pmatrix}
1 & 0 \\
0 & 1+p^d \\
\end{pmatrix} \right\rangle,$$
of order $p^4$, it follows that $\Ker(\GL(2,\Z/p^{N}\Z)\to \GL(2,\Z/p^d\Z))$ is a subgroup of order $p^{4(N-d)}$. Hence:
$$|G_N|=|\Gal(K(E[p^N])/K)| = |\Gal(K(E[p^d])/K)|\cdot p^{4(N-d)}= |G_d|\cdot p^{4(N-d)}.$$
Let $H_{T_d}$ be the subgroup of $G_d$ that fixes $T_d=T\cap E[p^d]\cong \Z/p^s\Z \oplus \Z/p^{d}\Z$, where a fixed point of order $p^d$ is $P_d=p^{N-d}P_N$. If we write $Q_d=p^{N-d}Q_N$, and we assume $s>0$ for now, then $H_{T_d}$ is the subgroup 
$$H_{T_d}=\left\{ \begin{pmatrix}
1 & 0 \\
0 & 1 \\
\end{pmatrix} + p^s\cdot \begin{pmatrix}
0 & a \\
0 & b \\
\end{pmatrix}: a,b \in \Z/p^{d}\Z  \right\}\cap G_d \subseteq \GL(2,\Z/p^{d}\Z),$$
and if $s=0$, then 
$$H_{T_d}=\left\{ \begin{pmatrix}
1 & a \\
0 & b \\
\end{pmatrix}: a\in \Z/p^{d}\Z,\ b\in (\Z/p^d\Z)^\times  \right\}\cap G_d \subseteq \GL(2,\Z/p^{d}\Z).$$
In either case, $H_{T_d}\equiv H_T \bmod p^d$, and $|H_{T}|/|H_{T_d}|=p^{2(N-d)}$. Hence,
$$[K(T):K]=\frac{|G_N|}{|H_T|}=\frac{|G_d|\cdot p^{4(N-d)}}{|H_{T_d}|\cdot p^{2(N-d)}}=\frac{|G_d|}{|H_{T_d}|}\cdot p^{2N-2d}.$$

We remark here that this formula is the same for any subgroup $T\subseteq E[p^N]$ such that $T_d=T\cap E[p^d]$. Thus, it only depends on the size of $|G_d|/|H_{T_d}|$, for each possible $T_d\cong \Z/p^s\Z\oplus \Z/p^d\Z$. 
\item Suppose that $d\leq s\leq N$. As before, $G_N$ is the full inverse image of $G_d$, and so $|G_N|=|G_d|\cdot p^{4(N-d)}$. 
In this case, $d\leq s$, and so, by the definition of $H_T\subseteq G_N$, every $M\in H_T$ reduces to the identity modulo $s$ and thus modulo $d$. Since $N\geq s \geq d\geq 1$, it follows that 
$$H_T=\left\{ \begin{pmatrix}
1 & 0 \\
0 & 1 \\
\end{pmatrix} + p^s\cdot \begin{pmatrix}
0 & a \\
0 & b \\
\end{pmatrix}: a,b \in \Z/p^{N}\Z  \right\}$$
and, therefore,
$$|H_T|=p^{2(N-s)},$$
and
$$[K(T):K]=\frac{|G_N|}{|H_T|}=\frac{|G_d|\cdot p^{4(N-d)}}{p^{2(N-s)}}=|G_d|\cdot p^{2N+2s-4d}.$$

\end{enumerate}
\end{proof} 

In the following corollary, we apply Theorem \ref{thm-bounds} to the case of full $\GL(2,\Z_p)$ image. We remind the reader that the order 
$$|\GL(2,\Z/N\Z)|=\varphi(N)\cdot N^3 \prod_{p|N}(1-1/p^2)$$
for any $N\geq 1$. In particular, 
$$|\GL(2,\Z/p^N\Z)|=(p-1)p^{N-1}\cdot p^{3N} \cdot (1-1/p^2)=(p-1)(p^2-1)p^{4N-3}.$$

\begin{cor}\label{cor-full}
Let $K$ be a number field, and let $E/K$ be an elliptic curve. Then: 
\begin{enumerate}
	\item Suppose $G=\rho_{E,p^\infty}(\GK)\cong \GL(2,\Z_p)$ for some prime $p$. Let $0\leq s\leq N$ be fixed integers, let $T_{s,N}\subseteq E[p^N]$ be a subgroup isomorphic to $\Z/p^s\Z \oplus \Z/p^{N}\Z$. Then:
$$[K(T_{s,N}):K] = \begin{cases} (p^2-1)p^{2N-2} &,  \text{ if } s=0,\\
(p-1)(p^2-1)p^{2N+2s-3} &, \text{ if } s\geq 1. \end{cases}$$
 \item For a fixed elliptic curve $E/K$ without CM the degree $[K(T_{s,N}):K]$ is given by the formula in (1) for all but finitely many primes $p$.
\end{enumerate}  
\end{cor}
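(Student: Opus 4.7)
The plan is to apply Theorem~\ref{thm-bounds} with $d=1$, since when $G=\rho_{E,p^\infty}(\GK)\cong\GL(2,\Z_p)$, the image $G$ is by definition the full inverse image under reduction mod $p$ of $G_1=\GL(2,\F_p)$, so $G$ is defined at level $p$. In particular, for any $N\geq 1$, we have $G_N=\GL(2,\Z/p^N\Z)$, whose order is
$$|\GL(2,\Z/p^N\Z)|=(p-1)(p^2-1)p^{4N-3}.$$
Note also that if $G\cong\GL(2,\Z_p)$ then $E/K$ cannot have CM (the CM case forces the image to sit inside the normalizer of a Cartan), so the hypothesis of Theorem~\ref{thm-bounds} is satisfied.

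First I would handle the base case $N=1$ by a direct computation using the explicit description of $H_T$ given in the statement of Theorem~\ref{thm-bounds}. If $s=N=1$, then $T=E[p]$, so $H_T=\{I\}$ and $[K(T):K]=|G_1|=(p-1)(p^2-1)p$, which matches $(p-1)(p^2-1)p^{2N+2s-3}$ at $s=N=1$. If $s=0$ and $N=1$, then $H_T$ is the Borel subgroup stabilizing the line $\langle P\rangle$, of order $p(p-1)$, so $[K(T):K]=(p-1)(p^2-1)p/(p(p-1))=p^2-1$, which matches $(p^2-1)p^{2N-2}$ at $N=1$.

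Next I would handle the remaining ranges of $(s,N)$ using cases (ii) and (iii) of Theorem~\ref{thm-bounds}. For $s\in\{0,1\}$ and $N\geq 1$, case (ii) gives
$$[K(T_{s,N}):K]=\frac{|G_1|}{|H_{T_1}|}\cdot p^{2N-2},$$
and plugging in the two values computed above for $[K(T_{s,1}):K]$ yields $(p^2-1)p^{2N-2}$ when $s=0$, and $(p-1)(p^2-1)p\cdot p^{2N-2}=(p-1)(p^2-1)p^{2N-1}$ when $s=1$; the latter agrees with $(p-1)(p^2-1)p^{2N+2s-3}$ at $s=1$. For $s\geq 2$, we are in case (iii) with $d=1\leq s\leq N$, which gives
$$[K(T_{s,N}):K]=|G_1|\cdot p^{2N+2s-4}=(p-1)(p^2-1)p\cdot p^{2N+2s-4}=(p-1)(p^2-1)p^{2N+2s-3},$$
again matching the claimed formula. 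This establishes (1).

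For part (2), I would invoke Serre's open image theorem (Theorem~\ref{thm-serre}): since $E/K$ has no CM, $\rho_{E,p^\infty}$ is surjective for all but finitely many primes $p$, so for all such $p$ the hypothesis $G\cong\GL(2,\Z_p)$ of part (1) holds, and the formula applies. There is no real obstacle here; the only bookkeeping issue is to verify that the case analysis in Theorem~\ref{thm-bounds} interpolates cleanly between $N=1$ and $N>1$, and between $s\leq 1$ and $s\geq 2$, which amounts to checking that the three expressions agree on the overlap.
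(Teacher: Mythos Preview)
Your proposal is correct and follows essentially the same approach as the paper: apply Theorem~\ref{thm-bounds} with $d=1$ (since the full image is trivially defined at level $p$), compute the relevant stabilizer sizes, and invoke Serre's theorem for part (2). The paper organizes the cases slightly more economically: it computes the $s=0$ case directly for general $N$ using the explicit description of $H_T$ (size $(p-1)p^{2N-1}$ in $G_N$), and for $s\geq 1$ it applies case (iii) immediately, since $d=1\leq s$ already covers $s=1$---so your separation into $s=1$ via case (ii) and $s\geq 2$ via case (iii) is unnecessary, though harmless. One small terminological slip: for $s=0$, $N=1$, the group $H_T$ is the stabilizer of the \emph{point} $P$ (order $p(p-1)$), not the Borel stabilizing the \emph{line} $\langle P\rangle$ (which has order $p(p-1)^2$); your stated order is the correct one, so the computation is fine.
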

\begin{proof}
First, let $s=0$ and let $N\geq 1$ be arbitrary. Let $T=\langle P \rangle$ and let $\{P,Q\}$ be a basis of $E[p^N]$, and write $H_T$ for the subgroup of $G_N\cong \Gal(K(E[p^N])/K)$ that fixes $K(T)$. Then, the formulae of Theorem \ref{thm-bounds} says that $|H_T|=\varphi(p^N)\cdot p^N = (p-1)p^{N-1}p^N$, while $|G_N|=|\GL(2,\Z/p^N\Z)|=(p-1)(p^2-1)p^{4N-3}$. Thus,
$$[K(T):K]=[K(P):K]=\frac{|G_N|}{|H_T|}=\frac{(p-1)(p^2-1)p^{4N-3}}{(p-1)p^{2N-1}}=(p^2-1)p^{2N-2}.$$ 
Otherwise, $N\geq s\geq 1=d$, and we are in case (iii) of Theorem \ref{thm-bounds}. Hence,
$$[K(T):K]=|G_1|\cdot p^{2N+2s-4} =(p-1)(p^2-1)p\cdot p^{2N+2s-4}=(p-1)(p^2-1)p^{2N+2s-3}.$$ 
This shows (1). Now (2) is a direct consequence of Serre's open image theorem (Theorem \ref{thm-serre}).
\end{proof}

Corollary \ref{cor-arai} says that there is a finite number of possible $p$-adic images $\rho_{E,p^\infty}(\GK)$, up to conjugation, for any elliptic curve $E/K$. Thus, there are divisibility bounds  as in Theorem \ref{thm-bounds} that are uniform over all elliptic curves over $K$.

\begin{cor}\label{cor-gsNgeneral}
Let $p$ be a prime, let $K$ be a number field, and let $n(K,p)\geq 1$ be the number given by Theorem \ref{thm-arai}. Let $G^i$, for $i=1,\ldots, a(K,p)$, be the possible $p$-adic images for $\rho_{E,p^\infty}$, given by Corollary \ref{cor-arai}. Let $E/K$ be an elliptic curve without CM. Let $0\leq s \leq N$ be fixed integers, let $T\subseteq E[p^N]$ be any subgroup isomorphic to $\Z/p^s\Z \oplus \Z/p^{N}\Z$, and put $M=\min\{N,n(K,p)\}$. Then:
\begin{enumerate} 
\item  Suppose $\rho_{E,p^\infty}(\GK)=G^i$, for a fixed $1\leq i \leq a(K,p)$. Then, $[K(T):K]$ is divisible by $g_{s,M}(G^i)\cdot \max\{1, p^{2N-2n(K,p)} \}$ if $s<n(K,p)$ and by $g_{n(K,p),n(K,p)}(G^i)\cdot p^{2N+2s-4n(K,p)}$ if $s\geq n(K,p)$, where 
$$g_{s,M}(G^i)=\gcd\left(\left\{ \frac{|G^i_M|}{|H^i_{T'}|} : T'\cong \Z/p^s\Z\oplus \Z/p^{M}\Z\subseteq \langle P^i_M,Q^i_M\rangle \right\} \right),$$
the group $G^i_M$ is defined by $G^i_M\equiv G^i_{n(K,p)}\equiv G^i\bmod p^M$ as a subgroup of $\GL(2,\Z/p^M\Z)$, and $H^i_{T'}$ is the subgroup of $G^i_M$ that fixes $T'$.
\item More generally, the degree $[K(T):K]$ is divisible by $g_{s,M}(K,p)\cdot \max\{1, p^{2N-2n(K,p)} \}$ if $s<n(K,p)$ and by $g_{n(K,p),n(K,p)}(K,p)\cdot p^{2N+2s-4n(K,p)}$ if $s\geq n(K,p)$, where
$$g_{s,M}(K,p) = \gcd(\{g_{s,M}(G^i) : 1\leq i \leq a(K,p)\}).$$
\end{enumerate}
Finally, if we define $m_{s,N}(G^i)$ and $m_{s,N}(K,p)$ like $g_{s,N}$ replacing $\gcd$ by $\min$, then $[K(T):K]\geq $
$$ \begin{cases} m_{s,M}(G^i)\cdot \max\{1, p^{2N-2n(K,p)} \}\geq m_{s,M}(K,p)\cdot \max\{1, p^{2N-2n(K,p)} \} &, \text{ if }  s<n(K,p),  \text{ and } \\
  m_{n(K,p),n(K,p)}(G^i)\cdot p^{2N+2s-4n(K,p)}\geq m_{n(K,p),n(K,p)}(K,p)\cdot p^{2N+2s-4n(K,p)} 
&, \text{ if }  s\geq n(K,p).\end{cases}$$
\end{cor}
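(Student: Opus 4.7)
The plan is to apply Theorem \ref{thm-bounds} uniformly, choosing $d = n(K,p)$ as the level of definition. By Arai's theorem (Theorem \ref{thm-arai}), every non-CM elliptic curve $E/K$ satisfies the hypothesis that $G = \rho_{E,p^\infty}(\GK)$ is the full inverse image of its reduction $G_d$ modulo $p^d$ when $d = n(K,p)$, so Theorem \ref{thm-bounds} applies with this value of $d$ for all such $E$ simultaneously.

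For part (1), I would fix $i$ and assume $\rho_{E,p^\infty}(\GK)$ is conjugate to $G^i$; then, setting $n = n(K,p)$, the triple $(s,N,n)$ falls into exactly one of the three cases of Theorem \ref{thm-bounds}. If $N \leq n$, then $M = N$ and case (i) yields $[K(T):K] = |G^i_N|/|H^i_T|$, and the additional factor $\max\{1,p^{2N-2n}\} = 1$ is trivially absorbed. If $s \leq n \leq N$, then $M = n$ and case (ii) yields $[K(T):K] = (|G^i_n|/|H^i_{T_n}|)\cdot p^{2N-2n}$, with $T_n = T\cap E[p^n]\cong \Z/p^s\Z\oplus \Z/p^n\Z$. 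Finally, if $n \leq s \leq N$, then case (iii) yields $[K(T):K] = |G^i_n|\cdot p^{2N+2s-4n}$; in this range the set of admissible $T_n$ collapses to $\{E[p^n]\}$ (since $s \geq n$ forces $E[p^n]\subseteq T$), so $H^i_{T_n}$ is trivial and $|G^i_n| = g_{n,n}(G^i) = m_{n,n}(G^i)$ by the very definition of these constants.

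In each of the first two cases, the factor $|G^i_M|/|H^i_{T'}|$ produced by Theorem \ref{thm-bounds} ranges, as $T$ varies among subgroups isomorphic to $\Z/p^s\Z\oplus \Z/p^N\Z$ in $E[p^N]$, over the finite set indexed by $T'\cong \Z/p^s\Z\oplus \Z/p^M\Z$ inside $E[p^M]$; taking the gcd of this set gives divisibility by $g_{s,M}(G^i)$, and taking the minimum yields the lower bound $m_{s,M}(G^i)$. Multiplying by $\max\{1,p^{2N-2n}\}$ (respectively $p^{2N+2s-4n}$ in case (iii)) produces the claimed divisibility and inequality for $[K(T):K]$. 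For part (2), I would invoke Corollary \ref{cor-arai} to recall that $\rho_{E,p^\infty}(\GK)$ is conjugate to one of the finitely many $G^1,\dots,G^{a(K,p)}$, so any integer dividing every $g_{s,M}(G^i)$ divides $[K(T):K]/\max\{1,p^{2N-2n}\}$ uniformly in $E$; taking the gcd across $i$ gives $g_{s,M}(K,p)$, and taking the minimum across $i$ gives $m_{s,M}(K,p)$, yielding the uniform statements at once.

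The main obstacle is purely clerical rather than conceptual: one must verify that the quantity $|G^i_M|/|H^i_{T'}|$ produced by Theorem \ref{thm-bounds} matches the defining formula for $g_{s,M}(G^i)$ in every regime of $(s,N,n)$, and in particular confirm the degenerate case (iii) where the indexing set for $T_n$ reduces to the singleton $\{E[p^n]\}$, forcing the gcd and minimum to agree with $|G^i_n|$. Once this compatibility is checked, both assertions follow by a direct divisibility argument together with the finiteness statement of Corollary \ref{cor-arai}, and no further ingredients are required.
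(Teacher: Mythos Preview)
Your proposal is correct and follows essentially the same approach as the paper's proof: set $d=n(K,p)$ via Arai's theorem, split into the three cases $N\leq n$, $s\leq n\leq N$, $n\leq s\leq N$ of Theorem~\ref{thm-bounds}, and then pass to $\gcd$ (respectively $\min$) over the finitely many $T'$ and, for part (2), over the finitely many $G^i$ from Corollary~\ref{cor-arai}. Your explicit remark that in case (iii) the indexing set collapses to $\{E[p^n]\}$, forcing $g_{n,n}(G^i)=m_{n,n}(G^i)=|G^i_n|$, is a welcome clarification that the paper leaves implicit.
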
 
\begin{proof} 
The result follows from the divisibility bounds of Theorem \ref{thm-bounds}. Clearly part (2) is a direct consequence of (1) and the fact that there are only $a(K,p)$ possible $p$-adic images (Corollary \ref{cor-arai}), so we will just prove (1). By Theorem \ref{thm-arai} for every elliptic curve $E/K$ without CM, the image $G=\rho_{E,p^\infty}(\GK)$ is defined at most modulo $p^{n(K,p)}$ so, for our purposes, we will define every representation exactly modulo $p^{n(K,p)}$, i.e., the value of $d$ in Theorem \ref{thm-bounds} will be always $d=n(K,p)$. Each image $G^i$ is then defined by $G^i_{d}\equiv G^i\bmod p^{d}$ as a subgroup of $\GL(2,\Z/p^{d}\Z)$, with respect to a basis $\{P^i_d,Q^i_d\}$ of $E^i[p^d]$, where $E^i$ is an elliptic curve with $G=\rho_{E,p^\infty}(\GK)=G^i$. We also fix a compatible basis $\{P^i_n,Q^i_n\}$ of $E^i[p^n]$ for each $n\geq 1$.

There are three possibilities according to the values of $s$, $N$, and $d=n(K,p)$. 
\begin{enumerate}[(i)]
\item Suppose $s\leq N\leq n(K,p)$. Then, Theorem \ref{thm-bounds}, part (i) shows that $[K(T):K]$ is divisible by the quantity $g_{s,N}(G^i)$, where 
$$g_{s,N}(G^i)=\gcd\left(\left\{ \frac{|G^i_N|}{|H^i_{T'}|} :  T'\cong \Z/p^s\Z\oplus \Z/p^{N}\Z\subseteq \langle P^i_N,Q^i_N\rangle \right\} \right),$$
where $H^i_{T'}$ is the subgroup of $G^i_N$ that fixes $T'$, and $G^i_N\equiv G^i_{n(K,p)}\equiv G^i \bmod p^N$.
\item Suppose $s\leq n(K,p)\leq N$. Then, Theorem \ref{thm-bounds}, part (ii) shows that $[K(T):K]$ is divisible by $g_{s,n(K,p)}(G^i)\cdot p^{2N-2n(K,p)}$, where  $$g_{s,n(K,p)}(G^i)=\gcd\left(\left\{ \frac{|G^i_{n(K,p)}|}{|H^i_{T'}|} :  T'\cong \Z/p^s\Z\oplus \Z/p^{n(K,p)}\Z\subseteq \langle P^i_{n(K,p)},Q^i_{n(K,p)}\rangle \right\}\right),$$
where $H^i_{T'}$ is the subgroup of $G^i_{n(K,p)}$ that fixes $T'$.
\item Suppose $n(K,p)\leq s\leq N$. Then, Theorem \ref{thm-bounds}, part (iii) shows that $[K(T):K]$ is divisible by $g_{n(K,p),n(K,p)}(G^i)\cdot p^{2N+2s-4n(K,p)}$, where
 $g_{n(K,p),n(K,p)}(G^i)=|G^i_{n(K,p)}|.$
\end{enumerate}
Hence, in all cases, $[K(T):K]$ is divisible by $g_{s,M}(G^i)\cdot \max\{1, p^{2N-2n(K,p)} \}$ if $s<n(K,p)$ and by $g_{n(K,p),n(K,p)}(G^i)\cdot p^{2N+2s-4n(K,p)}$ if $s\geq n(K,p)$, as claimed.
\end{proof}

By the results of \cite{rouse}, we know that $n(\Q,2)=5$ and $a(\Q,2)=1208$, so we can specialize the previous result to $p=2$ and $K=\Q$, as follows.

\begin{cor}\label{cor-gsN}
Let $E/\Q$ be an elliptic curve without CM. Let $0\leq s \leq N$ be fixed integers, let $T\subseteq E[2^N]$ be a subgroup isomorphic to $\Z/2^s\Z \oplus \Z/2^{N}\Z$, and put $M=\min\{N,5\}$. Then:
\begin{enumerate}
\item Suppose that $\rho_{E,2^\infty}(\GQ)=G^i$. Then, 
 $[\Q(T):\Q]$ is divisible by the number $g_{s,M}(G^i)\cdot \max\{1, 2^{2N-10} \}$ if $s<5$ and by $g_{5,5}(G^i)\cdot 2^{2N+2s-20}$ if $s\geq 5$, where the numbers 
$g_{s,M}(G^i)$ are defined as in Corollary \ref{cor-gsNgeneral}. 
\item More generally, $[\Q(T):\Q]$ is divisible by $g_{s,M}(\Q,2)\cdot \max\{1, 2^{2N-10} \}$ if $s<5$ and by the number $g_{5,5}(\Q,2)\cdot 2^{2N+2s-20}$ if $s\geq 5$, where $g_{s,M}(\Q,2)=\gcd(\{g_{s,M}(G^i): 1\leq i \leq 1208  \})$, and the values $g_{s,M}(\Q,2)$ are given by Table \ref{ex}.
\begin{table}[h!]
\renewcommand{\arraystretch}{1.3}
\begin{tabular}{r|c|ccccc| }
       \multicolumn{2}{c|}{\multirow{2}{*}{$g_{s,M}(\Q,2)$}}                     &  & & $M$ &  &    \\             
                             \cline{3-7}
             \multicolumn{2}{c|}{}              &  $1$       &     $2$     &    $3$    &     $ 4$      &     $5$        \\
                               \hline
                    & $0$          &  $1$       &     $1$     &    $1$    &     $ 2$      &     $2^3$     \\
   & $1$&  $1$       &     $1$     &    $1$    &      $2^2$      &     $2^4$      \\
    &$2$ &          &     $2$    &    $2^2$    &      $2^4$      &   $2^6$       \\
 $s$  & $3$&           &          &      $2^4$  &      $2^{6}$      &     $2^{8}$       \\
   &$4$ &           &            &          &      $2^{7}$      &     $2^{9}    $   \\
   &  $5$ &        &            &          &            &   $  2^{11}   $    \\  
   \hline \multicolumn{7}{c}{\phantom{2}}\\
\end{tabular}
\caption{$g_{s,M}(\Q,2)$, for $0\le s\le M\le 5$}\label{ex}
\end{table}
\end{enumerate}
Furthermore:
\begin{enumerate}[(a)]
\item The values $g_{s,M}(\Q,2)$ are best possible for $0\leq s \leq 3$, i.e., there is a type of representation $G^i$ such that $g_{s,M}(\Q,2)=g_{s,M}(G^i)=m_{s,M}(G^i)$ (with notation as in Cor. \ref{cor-gsNgeneral}).
\item For any $1\leq i\leq 1208$, and if $s\geq 4$, then $g_{s,M}(G^i)$ is divisible by $2\cdot g_{s,M}(\Q,2)$ or $3\cdot g_{s,M}(\Q,2)$, and both cases occur with equality for certain $2$-adic images.
\item Let $s\geq 4$. The image $G^i$ is such that $g_{s,M}(G^i)$ is divisible by $3\cdot g_{s,M}(\Q,2)$ but not by $2\cdot g_{s,M}(\Q,2)$ if and only if $G^i$ corresponds to the $2$-adic image with label {\rm  \texttt{X441}}. 
\item The minimal values $m_{s,M}(\Q,2)$ are equal to $g_{s,M}(\Q,2)$ for $0\leq s \leq 3$, and equal to $2\cdot g_{s,M}(\Q,2)$ for $s\geq 4$.
\end{enumerate} 
\end{cor}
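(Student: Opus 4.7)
The plan is to specialize Corollary \ref{cor-gsNgeneral} to the case $K=\Q$, $p=2$. By Theorem \ref{thm-rzb} of Rouse and Zureick--Brown, we have $n(\Q,2)=5$ and $a(\Q,2)=1208$. Part (1) of the corollary is then an immediate instance of part (1) of Corollary \ref{cor-gsNgeneral} with these parameters; the case split $s<5$ versus $s\geq 5$, as well as the exponent $2N-2n(K,p)=2N-10$ and $2N+2s-4n(K,p)=2N+2s-20$, are inherited verbatim. Similarly, part (2) follows at once from part (2) of that corollary, provided the table of values of $g_{s,M}(\Q,2)$ in Table \ref{ex} is verified.

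To verify the table, I would carry out the following computation. For each of the $1208$ subgroups $G^i\subseteq \GL(2,\Z_2)$, encoded in the companion files of \cite{rouse} (in particular \texttt{gl2data.gz}), and for each $M\in\{1,2,3,4,5\}$, reduce modulo $2^M$ to obtain $G^i_M\subseteq \GL(2,\Z/2^M\Z)$. Fix a basis $\{P_M,Q_M\}$ of $(\Z/2^M\Z)^2$ and enumerate all subgroups $T'\cong \Z/2^s\Z\oplus \Z/2^M\Z$ of $\langle P_M,Q_M\rangle$ that contain a point of order $2^M$; for each such $T'$, compute the stabilizer $H^i_{T'}\subseteq G^i_M$ using the explicit matrix description given in Theorem \ref{thm-bounds}, and record the index $|G^i_M|/|H^i_{T'}|$. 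Taking the $\gcd$ (respectively the minimum) of these indices over all such $T'$ yields $g_{s,M}(G^i)$ (respectively $m_{s,M}(G^i)$), and the $\gcd$ (respectively minimum) over $i=1,\ldots,1208$ produces $g_{s,M}(\Q,2)$ (respectively $m_{s,M}(\Q,2)$), which is then compared against the entries of Table \ref{ex}.

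The supplementary assertions (a)--(d) are verified by inspection of the same data. For (a), one records, for each pair $(s,M)$ with $s\leq 3$, a specific image $G^i$ that realizes $g_{s,M}(G^i)=m_{s,M}(G^i)=g_{s,M}(\Q,2)$. For (b) and (c), one checks that for $s\geq 4$ the ratio $g_{s,M}(G^i)/g_{s,M}(\Q,2)$ is always $2$ or $3$, and equals $3$ precisely when $G^i$ is the image labeled \texttt{X441}; the appearance of the prime $3$ reflects that \texttt{X441} is the exceptional image whose index in $\GL(2,\Z_2)$ is divisible by $3$, whereas every other image has $2$-power index dividing $64$ by Theorem \ref{thm-rzb}(1). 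For (d), the same enumeration yields that, for $s\geq 4$, the minimum of $g_{s,M}(G^i)$ over $i$ equals $2\cdot g_{s,M}(\Q,2)$, so $m_{s,M}(\Q,2)=2\cdot g_{s,M}(\Q,2)$.

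The main obstacle is computational rather than conceptual: the classification of \cite{rouse} already reduces the problem to a finite check, but one must correctly implement the enumeration of subgroups $T'$ of $E[2^M]$ for each admissible $(s,M)$ and keep track of a compatible basis, since the matrix description of $H_T$ in Theorem \ref{thm-bounds} presupposes that the first basis vector lies in $T$ and has exact order $2^M$. With a standard computer algebra setup (for instance in \texttt{Magma}) this is a routine computation, and the output matches the entries of Table \ref{ex}.
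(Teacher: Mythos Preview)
Your approach is essentially the paper's: both derive parts (1)--(2) directly from Corollary~\ref{cor-gsNgeneral} with the Rouse--Zureick-Brown inputs $n(\Q,2)=5$ and $a(\Q,2)=1208$, and both establish Table~\ref{ex} and claims (a)--(d) by a finite Magma computation over the $1208$ images (the paper also works out two illustrative cases by hand for the image \texttt{X235l}).

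Two small corrections to your write-up. First, in (b) the statement only asserts that $g_{s,M}(G^i)$ is \emph{divisible} by $2\cdot g_{s,M}(\Q,2)$ or by $3\cdot g_{s,M}(\Q,2)$, with equality for some $i$; your phrasing that the ratio ``is always $2$ or $3$'' is stronger than what is claimed or needed. Second, your heuristic that \texttt{X441} is the unique image whose index in $\GL(2,\Z_2)$ is divisible by~$3$ is false: Theorem~\ref{thm-rzb}(1) allows index dividing $96$, and several of the $1208$ images do have index $96$. The identification of \texttt{X441} in (c) therefore genuinely rests on the computation, not on this shortcut.
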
 
\begin{proof}
The first part is immediate from Corollary \ref{cor-gsNgeneral}. The values $g_{s,N}(G^i)$ and $g_{s,N}(\Q,2)$ have been calculated using Magma \cite{magma} for each possible image group $G^i$ as described by \cite{rouse}. In our calculations we have found all possible values of $|G^i_N|/|H^i_{T'}|$ for each $N\leq 5$, for each choice of $T'\subseteq E[2^N]$, and for all $1\leq i\leq a(\Q,2)=1208$.  The Magma scripts used in this computation can be found at the research website of either author. In particular, the file \texttt{2primary\_Ss.txt} contains the values of $g_{s,t}(G^i)$, for $0\leq s\leq 5$ and $1\leq t\leq 5$, for each of the possible $2$-adic images that occur for elliptic curves over $\Q$. In other words, in the file \texttt{2primary\_Ss.txt} the reader can find an analogue of Table \ref{ex2} for each of the $1208$ possible $2$-adic images for non-CM curves. We shall outline here the computation for a given image. 

For instance, say $G^i=G$ corresponds to the $2$-adic image \texttt{X235l} (as always, following the notation of \cite{rouse}), which is defined modulo $16$, and is generated in $\GL(2,\Z/16\Z)$ by
$$
G_4 = \left\langle
\begin{pmatrix}
 1 & 0\\
 1 & 1 
 \end{pmatrix} ,
 \begin{pmatrix}
  1 & 0\\
   12 & 1  \end{pmatrix} ,
 \begin{pmatrix}
 9 & 0\\
  0 & 1 
   \end{pmatrix} ,
 \begin{pmatrix}
 1 & 0\\
  14 & 1  \end{pmatrix} ,
 \begin{pmatrix}
 5 & 0\\
  0 & 1 
   \end{pmatrix} ,
 \begin{pmatrix}
 15 & 0\\
  0 & 1  
  \end{pmatrix} ,
 \begin{pmatrix}
 9 & 0\\
  8 & 9 
   \end{pmatrix} ,
 \begin{pmatrix}
 1& 0\\ 
 8 & 1 
  \end{pmatrix} 
\right\rangle.
$$
In our notation $G_N$ acts on $E[2^N]$ on the left, i.e., $M\in G_N$ acts on $R\in E[2^N]$ by $M\cdot R$, so our matrices are transposed from those that appear in \cite{rouse}. Then, the values $g_{s,M}(G)$ are given in Table \ref{ex2}. We shall work out two cases in detail: $(s,N)=(0,1)$ and $(s,N)=(1,5)$.

\begin{table}[h!]
\renewcommand{\arraystretch}{1.3}
\begin{tabular}{r|c|ccccc| }
       \multicolumn{2}{c|}{\multirow{2}{*}{$g_{s,M}(G)$}}                     &  & & $M$ &  &    \\             
                             \cline{3-7}
             \multicolumn{2}{c|}{}              &  $1$       &     $2$     &    $3$    &     $ 4$      &     $5$        \\
                               \hline
                    & $0$          &  $1$       &     $1$     &    $1$    &     $ 2$      &     $2^3$     \\
   & $1$&  $2$       &     $2$     &    $2$    &      $2^2$      &     $2^4$      \\
    &$2$ &          &     $2^3$    &    $2^3$    &      $2^4$      &   $2^6$       \\
 $s$  & $3$&           &          &      $2^5$  &      $2^{6}$      &     $2^{8}$       \\
   &$4$ &           &            &          &      $2^{8}$      &     $2^{10}    $   \\
   &  $5$ &        &            &          &            &   $  2^{12}   $    \\  
   \hline \multicolumn{7}{c}{\phantom{2}}\\
\end{tabular}
\caption{Values $g_{s,M}(G)$, for $0\le s\le M\le 5$, where $G$ is the image with label \texttt{X235l}.}\label{ex2}
\end{table}

Let $s=0$ and $N=1$. We first compute $G_1$ which is the reduction of $G$ modulo $2$, so it is generated by the reduction of every generator of $G_4$ modulo $2$. Thus,
$$G_1 = \left\langle
\begin{pmatrix}
 1 & 0\\
 1 & 1 
 \end{pmatrix} \right\rangle \subset \GL(2,\Z/2\Z).
 $$
As $T'$ runs over the three subgroups $T_1'=\langle(1,0)\rangle$, $T_2'=\langle(0,1)\rangle$, and $T_3'=\langle(1,1)\rangle$ of order $2$ of $E[2]$ (where the points are given in the same coordinates chosen to represent the matrices), we obtain
$$\frac{|G_1|}{|H_{T_1'}|} = \frac{|G_1|}{|\{\operatorname{Id} \}|}=2,\ \frac{|G_1|}{|H_{T_2'}|} =\frac{|G_1|}{|G_1|} =1,\ \frac{|G_1|}{|H_{T_3'}|} =\frac{|G_1|}{|\{\operatorname{Id} \}|}=2.$$
Hence, $g_{0,1}(G) = \gcd(1,2)=1$, as it should be, for an elliptic curve $E/\Q$ with $2$-adic image \texttt{X235l} has a $2$-torsion point defined over $\Q$ (in fact, $E(\Q)[2^\infty]\cong \Z/8\Z$).

Let now $(s,N)=(1,5)$. Since $G$ is defined modulo $16$, the image modulo $2^5$ is the full inverse image of $G_4$ in $\GL(2,\Z/32\Z)$. We compute $G_5$, then, as the subgroup mod $32$ generated by the lift of generators of $G_4$, together with generators of the kernel of reduction $\GL(2,\Z/32\Z)\to \GL(2,\Z/16\Z)$. We obtain that $G_5$ is generated by
$$
G_5 = \left\langle
\begin{pmatrix}
 1 & 0\\
 1 & 1 
 \end{pmatrix} ,
 \begin{pmatrix}
  1 & 0\\
   12 & 1  \end{pmatrix} ,\ldots ,
 \begin{pmatrix}
 1& 0\\ 
 8 & 1 
  \end{pmatrix} ,
   \begin{pmatrix}
   17& 0\\ 
   0 & 1 
    \end{pmatrix} ,
     \begin{pmatrix}
     1& 16\\ 
     0 & 1 
      \end{pmatrix} ,
       \begin{pmatrix}
       1& 0\\ 
       16 & 1 
        \end{pmatrix} ,
         \begin{pmatrix}
         1& 0\\ 
         0 & 17 
          \end{pmatrix} 
\right\rangle,
$$
which is a subgroup of order $4096$. Now we need to parametrize subgroups $T'\subset E[5]$ such that $T'\cong \Z/2\Z\oplus \Z/2^5\Z$. Equivalently, we are parametrizing structures $T'=\langle E[2],Q\rangle$ where $Q$ is a point of exact order $2^5$. For each such choice of $Q$, we first find $H_Q$, the stabilizer of $Q$ in $G_5$. Then,
$$H_{T'} = H_Q\cap \{M \in \GL(2,\Z/2^5\Z) : M\equiv \operatorname{Id} \bmod \  2  \}.$$
For instance, let $Q=(1,0)$. Then,
$$H_Q= \left\langle
     \begin{pmatrix}
     1& 16\\ 
     0 & 1 
      \end{pmatrix} ,
       \begin{pmatrix}
       1& 0\\ 
       0 & 25 
        \end{pmatrix}  
\right\rangle,
$$
and $H_{T'}=H_Q$ because every matrix in $H_Q$ already fixes $E[2]$ as well. Thus,
$$\frac{|G_5|}{|H_{T'}|} =\frac{4096}{8} = 512.$$
Similarly, let $Q=(0,1)$. Then,
$$
H_Q = \left\langle
\begin{pmatrix}
 1 & 0\\
 1 & 1 
 \end{pmatrix} ,
 \begin{pmatrix}
  1 & 0\\
   12 & 1  \end{pmatrix} ,
 \begin{pmatrix}
 9& 0\\ 
 0 & 1 
  \end{pmatrix} ,
   \begin{pmatrix}
   1& 0\\ 
   14 & 1 
    \end{pmatrix} ,
     \begin{pmatrix}
     5& 0\\ 
     0 & 1 
      \end{pmatrix} ,
       \begin{pmatrix}
       15& 0\\ 
       0 & 1 
        \end{pmatrix} ,
         \begin{pmatrix}
         1& 0\\ 
         8 & 1 
          \end{pmatrix},\begin{pmatrix}
                 17& 0\\ 
                 0 & 1 
                  \end{pmatrix} 
\right\rangle,
$$
while  
$$
H_{T'} = \left\langle
 \begin{pmatrix}
  1 & 0\\
   12 & 1  \end{pmatrix} ,
 \begin{pmatrix}
 9& 0\\ 
 0 & 1 
  \end{pmatrix} ,
   \begin{pmatrix}
   1& 0\\ 
   14 & 1 
    \end{pmatrix} ,
     \begin{pmatrix}
     5& 0\\ 
     0 & 1 
      \end{pmatrix} ,
       \begin{pmatrix}
       15& 0\\ 
       0 & 1 
        \end{pmatrix} ,
         \begin{pmatrix}
         1& 0\\ 
         8 & 1 
          \end{pmatrix},\begin{pmatrix}
                 17& 0\\ 
                 0 & 1 
                  \end{pmatrix} 
\right\rangle,
$$
because every matrix in $H_Q$ already fixes $E[2]$ as well, except for $\begin{pmatrix}
                 1& 0\\ 
                 1 & 1 
                  \end{pmatrix}$. Thus,
$$\frac{|G_5|}{|H_{T'}|} =\frac{4096}{256} = 16.$$
It follows that $g_{1,5}(G)$ is a divisor of $16=2^4$. In fact, the calculation for all possible $T'$ reveals that $g_{1,5}(G)=2^4$, and in fact, $g_{1,5}(G^k)$ is divisible by $2^4$, for all $1\leq k\leq 1208$. Hence, $g_{1,5}(\Q,2)=2^4$ as it appears in Table \ref{ex}. Since the value $2^4$ is in fact achieved for a specific $2$-adic image that occurs over $\Q$, it follows that $g_{1,5}(\Q,2)=2^4=m_{1,5}(\Q,2)$ as well. Similarly, our calculations show that, in all cases, the values $g_{s,M}(\Q,2)$ are best possible for $0\leq s \leq 3$, i.e., there is a type of representation $G^i$ such that $g_{s,M}(\Q,2)=g_{s,M}(G^i)=m_{s,M}(G^i)$. This shows (a).

Moreover, for any $1\leq i\leq 1208$, and if $s\geq 4$, then the data computed in \texttt{2primary\_Ss.txt} shows that $g_{s,M}(G^i)$ is divisible by $2\cdot g_{s,M}(\Q,2)$ or $3\cdot g_{s,M}(\Q,2)$. And $g_{s,M}(G^i)$ is divisible by $3\cdot g_{s,M}(\Q,2)$ but not by $3\cdot g_{s,M}(\Q,2)$ only in one case, that of \texttt{X441}. This shows (b) and (c). 

Finally, it follows from (b) that the minimal values $m_{s,M}(\Q,2)$ are equal to $g_{s,M}(\Q,2)$ for $0\leq s \leq 3$, and equal to $2\cdot g_{s,M}(\Q,2)$ for $s\geq 4$. This shows (d).
\end{proof} 

We remark that the values computed for Table \ref{ex} are fairly regular. Indeed, for $1\leq s\leq 3$ and $3\leq N\leq 5$, we have $g_{s,N}(\Q,2)=2^{2(s+N-4)}$. Thus, the formulas can be simplified as follows.

\begin{cor}\label{cor-Q}
Let $E/\Q$ be an elliptic curve without CM. Let $0\leq s \leq N$ be fixed integers, and let $T\subseteq E[2^N]$ be a subgroup isomorphic to $\Z/2^s\Z \oplus \Z/2^{N}\Z$. Then, $[\Q(T):\Q]$ is divisible by $2$ if $s=N=2$, and otherwise by
$$\begin{cases}
2^{2N-7} & \text{ if } s=0 \text{ and } N\geq 4, \text{ or}\\
2^{2N+2s-8 } & \text{ if } s\geq 1 \text{ and } N\geq \max\{3,s\},\\
\end{cases}$$
unless $s\geq 4$ and $j(E)$ is one of the two values
$$-\frac{3\cdot 18249920^3}{17^{16}} \text{ or } -\frac{7\cdot 1723187806080^3}{79^{16}}$$
in which case $[\Q(T):\Q]$ is divisible by $3\cdot 2^{2N+2s-9}$. Furthermore, these divisibility properties are best possible, in the sense that for each $s,N$ there exists an elliptic curve $E/\Q$ and a subgroup $T\subseteq E[2^N]$ such that $[\Q(T):\Q]$ equals the bound above.
\end{cor}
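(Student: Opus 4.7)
The proof will essentially be a case analysis, applying Corollary \ref{cor-gsN} and simplifying the divisibility bound $g_{s,M}(\Q,2)\cdot\max\{1,2^{2N-10}\}$ (for $s<5$) or $g_{5,5}(\Q,2)\cdot 2^{2N+2s-20}$ (for $s\geq 5$), where $M=\min\{N,5\}$, using the explicit values in Table \ref{ex}. My plan is to split the range of $(s,N)$ into the subregions $N\leq 5$ and $N\geq 5$, and within each subregion to treat $s=0$, $1\leq s\leq 3$, and $s\geq 4$ separately, matching the product from Corollary \ref{cor-gsN} against the claimed closed forms $2^{2N-7}$, $2^{2N+2s-8}$, or $2$ (in the exceptional case $s=N=2$).

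More concretely: for $s=0$ and $N\geq 4$, Table \ref{ex} gives $g_{0,4}(\Q,2)=2$ and $g_{0,5}(\Q,2)=2^3$, and multiplying by $\max\{1,2^{2N-10}\}$ produces exactly $2^{2N-7}$. For $1\leq s\leq 3$ and $N\geq \max\{3,s\}$ a direct computation from Table \ref{ex} shows that $g_{s,M}(\Q,2)\cdot\max\{1,2^{2N-10}\}=2^{2N+2s-8}$ in every entry that is not the degenerate corner $s=N=2$ (where Table \ref{ex} gives $2$, and the stated bound in the corollary is exactly $2$). For $s\geq 4$ one uses part (b) of Corollary \ref{cor-gsN}: the degree is divisible by either $2\cdot g_{s,M}(\Q,2)$ or $3\cdot g_{s,M}(\Q,2)$ (times the $2^{2N-10}$ or $2^{2N+2s-20}$ correction), which simplifies to $2^{2N+2s-8}$ or $3\cdot 2^{2N+2s-9}$; the latter occurs exactly when the $2$-adic image is \texttt{X441}, by part (c) of Corollary \ref{cor-gsN}. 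The two excluded $j$-invariants are then identified as the $j$-invariants of the sporadic curves with image \texttt{X441} listed in Theorem 1.1 and Table 1 of \cite{rouse}. For $s\geq 5$ the verification is the same kind of bookkeeping using the row $M=5$ of Table \ref{ex} and the extra factor $2^{2N+2s-20}$.

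The sharpness statement I would prove by exhibiting, for each $(s,N)$ in the claimed ranges, a $2$-adic image $G^i$ (chosen among the $1208$ of \cite{rouse}) for which $g_{s,M}(\Q,2)=m_{s,M}(G^i)$ and for which \cite{rouse} provides an explicit one-parameter family of elliptic curves $E/\Q$ realizing $G^i$ as $\rho_{E,2^\infty}(\GQ)$; choosing an appropriate $T\subseteq E[2^N]$ and invoking Theorem \ref{thm-bounds} yields $[\Q(T):\Q]$ equal to the stated bound. The case $s=0$ is handled by the family $\mathcal X_{\mathtt{X235l}}$ of Theorem \ref{thm-main1}, and the remaining cases use analogous families, which are the subject of Section \ref{sec-examples}.

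The main obstacle is the bookkeeping for $s\geq 4$: one has to carefully split into the \texttt{X441} exception and the generic case using parts (b)--(d) of Corollary \ref{cor-gsN}, and then match the two $j$-invariants in the statement to the $j$-invariants attached to \texttt{X441} in \cite{rouse}. All other cases are routine table-lookups, but the verification that the formula $2^{2N+2s-8}$ is exact (not merely an upper bound on the $2$-adic valuation) when one excludes \texttt{X441} requires checking that $g_{s,M}(G^i)$ equals $2\cdot g_{s,M}(\Q,2)$ for every non-\texttt{X441} image with $s\geq 4$, which is precisely what Corollary \ref{cor-gsN}(b)--(d) asserts. Once these points are settled, Corollary \ref{cor-Q} follows.
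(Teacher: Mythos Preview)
Your proposal is correct and follows essentially the same route as the paper: both arguments reduce to Corollary \ref{cor-gsN}, read off the values $g_{s,M}(\Q,2)$ from Table \ref{ex}, split into the cases $s=0$, $1\leq s\leq 3$, and $s\geq 4$, and invoke parts (b)--(c) to isolate the \texttt{X441} exception, with sharpness coming from parts (a)--(b). The only cosmetic difference is in identifying the two exceptional $j$-invariants: the paper goes through the isomorphism $\mathcal{X}_{441}\cong X^+_{ns}(16)$ and cites Baran \cite{baran} for its rational points, whereas you cite Theorem 1.1 and Table 1 of \cite{rouse} directly; both are valid. One small wording issue: for the divisibility claim you only need that $g_{s,M}(G^i)$ is \emph{divisible by} $2\cdot g_{s,M}(\Q,2)$ for non-\texttt{X441} images (which is exactly what (b) and (c) give), not that it \emph{equals} $2\cdot g_{s,M}(\Q,2)$.
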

\begin{proof}
By Corollary \ref{cor-gsN}, and specifically parts (2) and (b), it follows that $[\Q(T):\Q]$ is divisible by  
$$\begin{cases}
2^{2N-7} & \text{ if } s=0 \text{ and } N\geq 4, \text{ or}\\
2^{2N+2s-8 } & \text{ if } 1\leq s\leq 3 \text{ and } N\geq 3,\\
2^{2N+2s-8 } \text{ or } 3\cdot 2^{2N+2s-9} & \text{ if } 4\leq s \leq N.\\
\end{cases}$$
However, part (c) says that the divisibility by $3\cdot 2^{2N+2s-9}$ and not by $2^{2N+2s-8}$ only occurs when the image is \texttt{X441}. By the work of \cite{rouse}, Section 8.3, the elliptic curves with $2$-adic image \texttt{X441} are parametrized by a modular curve $\mathcal{X}_{441}$ which, in turn, is isomorphic to $X^+_{ns}(16)$,  whose noncuspidal
points classify elliptic curves whose mod 16 image of Galois is contained in the
normalizer of a non-split Cartan subgroup (see Remarks 1.4 and 7.1, and Section 8.3 in \cite{rouse}), and it is given by a model $$X^+_{ns}(16) : y^2   = x^6-3x^4+x^2+1.$$
The non-cuspidal rational points on $X^+_{ns}(16)$ were first computed by Baran \cite{baran} (and calculated again in \cite{rouse}, Section 8.3), and the only two non-CM associated $j$-invariants are 
$$-\frac{3\cdot 18249920^3}{17^{16}} \text{ or } -\frac{7\cdot 1723187806080^3}{79^{16}}.$$
Hence, Corollary \ref{cor-gsN}, part (c), implies that if $j(E)$ is not one of these two values, and $s\geq 4$, then $[\Q(T):\Q]$ is actually divisible by $2^{2N+2s-8}$, as claimed.

Finally, the assertion that the divisibilities are best possible follows from Corollary \ref{cor-gsN}, parts (a) and (b).
\end{proof}

We finish this section with a proof of Theorem \ref{thm-mainintro3} from the introduction. 

\begin{proof}[Proof of Theorem \ref{thm-mainintro3}] Let $E/\Q$ be an elliptic curve without complex multiplication, and let us first assume that $p$ is a prime satisfying (A) and (B). In particular, Theorem \ref{thm-serre2} says that the representation $\rho_{E,p}:\GQ\to \GL(2,\Z/p\Z)$ must be surjective (for $p>13$ and $p\neq 17$ or $37$). By \cite{serre2}, IV-24, Lemma 3, if $\rho_{E,p}$ is surjective and $p\geq 5$, then the $p$-adic representation $\rho_{E,p^\infty}$ must be surjective as well. Thus, Corollary \ref{cor-full} gives an exact value for $[\Q(T):\Q]$ in this case, which coincide with the bounds stated in Theorem \ref{thm-mainintro3}.

Otherwise, assume that $p$ is a prime satisfying (A) and (C). Then, $G=\rho_{E,p^\infty}(\GQ)$ is defined modulo $p$. Moreover, the mod-$p$ image is either surjective, or the image is contained in one of the maximal subgroups of type (1), (2), or (3) of Theorem \ref{thm-serre2}. In \cite{sutherland} and \cite{zywina1} such types of images have been completely classified, and there are only $63$ possibilities of non-surjective mod-$p$ images $G^i_1\cong \rho_{E,p}(\GQ)$, $1\leq i\leq 63$. For each $G^i$ defined to be the full inverse $p$-adic image of $G^i_1$, the bounds of Cor. \ref{cor-gsNgeneral} apply, where $n(K,p)$ can be taken to be $1$ because these representations are defined modulo $p$. Thus, only $g_{k,1}(G^i)$, and $m_{k,1}(G^i)$, for $k=0,1$ play a role here. Each of these constants have been calculated with Magma, and recorded in Tables \ref{exSZ11} and \ref{exSZ37} below. Finally, in Table \ref{exQp0} we have calculated and recorded $g_{k,1}(\Q,p)$ and $m_{k,1}(\Q,p)$, for $k=0,1$.
\end{proof}

\section{Examples}\label{sec-examples}
In Table \ref{ex} we have given the values of $g_{s,N}$, for $0\le s\le N\le 5$, which played an important role in the bounds given by Corollary \ref{cor-Q}, and as claimed in the statement, for each pair $s,N$ there is an elliptic curve $E_{s,N}$ over $\Q$, and  a subgroup $T_{s,N}\subseteq E_{s,N}(\overline{\Q})$ isomorphic to $\Z/2^s\Z\oplus \Z/2^{N}\Z$, such that $[\Q(T_{s,N}):\Q]$ equals the bound given by Corollary \ref{cor-Q}. In fact, there are infinitely many non-isomorphic curves over $\Q$ with this property, given by one-parameter families (except for the two exceptional $j$-invariants mentioned in the statement of Corollary \ref{cor-Q}). In particular, the elliptic curves with $2$-adic images  \texttt{X193n} and \texttt{X441} up to conjugation (following the notation of \cite{rouse}), which are parametrized by the elliptic surface  $\mathcal{X}_{193n}$ and the modular curve $\mathcal{X}_{441}$, achieve the bound in all cases except for $s=0$ and $N\geq 1$, or $s=N=2$, which are achieved by the $2$-adic images  \texttt{X235l} or \texttt{X58i} up to conjugation, respectively (and parametrized by the surfaces $\mathcal{X}_{235l}$ or $\mathcal{X}_{58i}$ which we will give below). Here are the models of the modular curves (note that we have simplified the models of $\mathcal{X}_{58i}$, $\mathcal{X}_{193n}$, and $\mathcal{X}_{235l}$ given in \cite{rouse}, by changing variables so that $(0,0)$ belongs to the elliptic surface).

\begin{align*}
\mathcal{X}_{58i} &:  y^2=x^3 - 2(t^4+1)x^2 + (t^{8}-2t^4+1) x,\\
\mathcal{X}_{193n} &:  y^2=x^3 + (256t^8 - 256t^6 + 352t^4 - 16t^2 + 
1)x^2 + 256t^4(4t^2-1)^4 x,\\
\mathcal{X}_{235l} &:  y^2= x^3 + (t^8 - 4t^6 - 2t^4 - 4t^2 + 1)x^2 + 16t^8x,\\
\mathcal{X}_{441} &: y^2   = x^6+x^4-3x^2+1.
\end{align*}

As mentioned in the proof of Corollary \ref{cor-Q}, the curve $\mathcal{X}_{441} = X^+_{ns}(16)$, a modular curve of genus $2$, contains only two non-cuspidal rational points that correspond to non-CM elliptic curves, whose $j$-invariants are given in the statement of the corollary.

In this section we show explicitly, as an example, the field of definition of $T=\Z/2^N\Z$  for $N\le 5$ (with $T$ chosen so that $[\Q(T):\Q]$ is minimal) in the elliptic surface $\mathcal{X}_{235l}$ and points of order $2^N$ for $N=3,4,5$.  The $2$-adic image \texttt{X235l} (as always, following the notation of \cite{rouse}) is defined modulo $16$. For each elliptic curve $E$ with image \texttt{X235l} (up to conjugation) there is a subgroup $T\cong \Z/2^N\Z \subseteq E[2^N]$, for $N\geq 4$, such that $[\Q(T):\Q]=2^{2N-7}$. Indeed, let us show that
$$g_{0,4}(\texttt{X235l})=g_{0,4}(\Q,2)=2,\ \text{ and }\ g_{0,5}(\texttt{X235l})=g_{0,5}(\Q,2)=2^3.$$
In order to do this, we show explicitly that the curves with image \texttt{X235l} (up to conjugation) have a $16$-torsion point defined over a quadratic extension of $\Q$, and a $32$-torsion point defined over an extension of degree $8$.  As shown in \cite{rouse}, the curves with $2$-adic image \texttt{X235l} are parametrized by the surface $\mathcal{X}_{235l}$ that appears above (see also our Remark \ref{rem-rzb} to find the model in their database).  The torsion structure and generators of $\mathcal{X}_{235l}$ over $\Q(t)$ can be computed with Magma, and it is isomorphic to $\Z/8\Z$ with the following generator:
$$
P_8=(4t^2 ,-4t^2(t^4 - 1)).
$$
Over the quadratic extension $K=\Q(\sqrt{(t^4-1)(t^2+2t-1)})$ the point:
$$
P_{16}=(2t(t^4-1)+2t\alpha,2t(t^8-4t^6+2t^5-2t^4-2t+1)+2t(t^4+2t-1)\alpha),
$$
where $\alpha=2t+(t-1)\sqrt{(t^4-1)(t^2+2t-1)}$, is of order $16$.

To obtain a point of order $32$ it is necessary to go to $L=\Q(t)(\beta)$, a degree $4$ extension of $K$, where $\beta$ satisfies:
$$
\begin{array}{l}
\beta^4 + (-12t\alpha - 2t^8 + 8t^6 - 12t^5 + 4t^4 + 8t^2 + 12t - 2)\beta^2 + ((-16t^5 - 32t^2 + 16t)\alpha - 16t^9 + 64t^7 -\\ \qquad  32t^6 + 32t^5 + 32t^2 - 16t)\beta + (-4t^9 + 16t^7 - 24t^6 + 8t^5 - 32t^3 + 24t^2 - 4t)\alpha + t^{16} - 8t^{14} - 4t^{13}\\
\qquad + 12t^{12} + 16t^{11} - 16t^{10} + 12t^9 + 22t^8 - 48t^7 + 56t^6 - 12t^5 + 12t^4 + 32t^3 - 32t^2 + 4t + 1=0
\end{array}
$$
and the point of order $32$ is:
$$
\begin{array}{rcl}
P_{32}&  = & (1/2\beta^2 - t\alpha - 1/2t^8 + 2t^6 - t^5 + t^4 + 2t^2 + t - 1/2 , \\
& & \qquad 1/2\beta^3 + (-3t\alpha - 1/2t^8 + 2t^6 - 3t^5 + t^4 + 2t^2 + 3t - 
1/2)\beta \\
& & \qquad \qquad  + (-2t^5 - 4t^2 + 2t)\alpha - 2t^9 + 8t^7 - 4t^6 + 4t^5 + 
4t^2 - 2t).
\end{array}
$$

\begin{remark}\label{rem-2div}
	In the example above we have  $2P_{32}=P_{16}$, and $2P_{16}=P_{8}$. We have halved these points using the $2$-divisibility method (cf. \cite{jeon}, or \cite[Prop. 12]{GJT14}). This method establishes that if $E$ is an elliptic curve defined over a field $K$ and $P\in E(K)[N]$ then  there exists $Q\in E(L)[2N]$ where $[L:K]\le 4$ \cite[Theorem 3.1]{jeon}. We note here that while  \cite{GJT14} or \cite{jeon} use the $2$-divisibility method over number fields, the same method applies to function fields as well.
\end{remark}

\subsection{Examples of $p$-adic representations defined modulo $p$}\label{sec-modpimage} In this section we give examples of elliptic curves $E/\Q$ and primes $p$ such that the condition (C) of Theorem \ref{thm-mainintro3} is verified, i.e., $p$-adic representations attached to elliptic curves that are defined modulo $p$. We have also calculated the constants $g_{k,1}$ and $m_{k,1}$, for $k=0,1$, for each of  the examples that appear in this section, and they are listed in Table \ref{exSZ11}.

For $p=2$, the classification of \cite{rouse} of all possible $2$-adic images (for non-CM elliptic curves) includes the level of definition of the image. In particular, the images \texttt{X1}, \texttt{X2}, \texttt{X6}, and \texttt{X8} are the only images defined modulo $2$, and these correspond to $\GL(2,\Z/2\Z)$, \texttt{2Cn}, \texttt{2B}, and \texttt{2Cs} (in the notation of \cite{sutherland}) for \texttt{X1}, \texttt{X2}, \texttt{X6}, and \texttt{X8}, respectively. The following elliptic curves are examples in each image defined modulo $2$:
\begin{itemize} 
\item $2$-adic image \texttt{X1}, corresponds to all of $\GL(2,\Z/2\Z)$ modulo $2$, and \texttt{11a1} is an example.
\item $2$-adic image \texttt{X2}, corresponds to image \texttt{2Cn} modulo $2$, and \texttt{196a1} is an example.
\item $2$-adic image \texttt{X6}, corresponds to image \texttt{2B} modulo $2$, and \texttt{69a1} is an example.
\item $2$-adic image \texttt{X8}, corresponds to image \texttt{2Cs} modulo $2$, and \texttt{315b2} is an example.
\end{itemize} 

For $p>2$, in order to check that an image is defined modulo $p$, it suffices to check that the image modulo $p^2$ is the full inverse image of the mod-$p$ image, by the following result.

\begin{lemma}[\cite{arai}, Lemma 2.2]\label{lem-arai} Let $n\geq 1$ be an integer and let $p>2$ be a prime. Let $H$ be a closed subgroup of $\GL(2,\Z_p)$. Then $H$ contains $1+p^nM(2,\Z_p)$ if and only if $H\bmod p^{n+1}$ contains $1+p^nM(2,\Z/p^{n+1}\Z)$.
\end{lemma}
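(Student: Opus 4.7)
The plan is to prove the nontrivial ``if'' direction; the ``only if'' follows by reducing modulo $p^{n+1}$. Let $U_k := 1 + p^k M(2,\Z_p)$ for $k \geq 1$; these form a descending filtration of closed normal subgroups of $\GL(2,\Z_p)$, with each quotient $U_k/U_{k+1}$ canonically identified with $M(2,\F_p)$ via $1 + p^k A \mapsto A \bmod p$. The key structural fact I would establish is that for odd $p$ and $k \geq 1$, the $p$-th power map behaves linearly on successive quotients. Expanding $(1 + p^k A)^p$ by the binomial theorem, the cross term is $\binom{p}{2} p^{2k} A^2 = \tfrac{p-1}{2}\, p^{2k+1} A^2$, and every higher-order term $\binom{p}{i} p^{ik} A^i$ (for $3 \le i \le p-1$) or the final term $p^{pk} A^p$ lies in $p^{k+2} M(2,\Z_p)$ when $k \geq 1$ and $p$ is odd, so
$$(1 + p^k A)^p \equiv 1 + p^{k+1} A \pmod{p^{k+2}}.$$

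Given this, I would prove by induction on $k \geq 0$ that the natural projection $H \cap U_{n+k} \to U_{n+k}/U_{n+k+1}$ is surjective. The base case $k=0$ is exactly the hypothesis, since any $h \in H$ with $h \equiv 1 + p^n \widetilde{A} \pmod{p^{n+1}}$ automatically lies in $U_n$ as $n \geq 1$. For the inductive step, given $A \in M(2,\F_p)$, use the inductive hypothesis to pick $h \in H \cap U_{n+k-1}$ with $h \equiv 1 + p^{n+k-1} \widetilde{A} \pmod{p^{n+k}}$; then $h^p \in H$ lies in $U_{n+k}$ and satisfies $h^p \equiv 1 + p^{n+k} \widetilde{A} \pmod{p^{n+k+1}}$ by the structural fact, producing the required preimage.

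With surjections at every level in hand, for an arbitrary $g \in U_n$ I would build a sequence $h_0, h_1, h_2, \ldots$ with $h_j \in H \cap U_{n+j}$ satisfying
$$h_j^{-1} h_{j-1}^{-1} \cdots h_0^{-1}\, g \;\in\; U_{n+j+1}$$
for all $j \geq 0$, choosing each $h_j$ by applying the surjection at level $n+j$ to the coset represented by the previous remainder. The partial products $h_0 h_1 \cdots h_m$ all lie in $H$ and form a Cauchy sequence converging to $g$ in $\GL(2,\Z_p)$, so the closedness of $H$ forces $g \in H$, establishing $U_n \subseteq H$.

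The main obstacle is pinning down the behaviour of the $p$-th power map precisely on each level of the congruence filtration, and this is where the hypothesis $p > 2$ is indispensable: for $p = 2$ the cross term $\binom{2}{2} p^{2k} A^2 = p^{2k} A^2$ has valuation only $2k$, which equals $k+1$ exactly when $k = 1$, so squaring picks up a genuine quadratic contribution at the bottom of the filtration and the inductive step collapses. This is precisely why Arai's lemma, and consequently the ``defined modulo $p$'' reduction used to verify condition (C) in Theorem \ref{thm-mainintro3}, is stated only for odd primes.
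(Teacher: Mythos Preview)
The paper does not supply its own proof of this lemma; it is quoted verbatim from Arai \cite{arai} and used as a black box to verify condition (C) in Theorem~\ref{thm-mainintro3}. Your argument is correct and is the standard successive-approximation proof on the congruence filtration $U_k = 1 + p^k M(2,\Z_p)$: the binomial computation shows that for odd $p$ and $k\geq 1$ the $p$-th power map induces an isomorphism $U_k/U_{k+1}\to U_{k+1}/U_{k+2}$, so surjectivity of $H\cap U_n \to U_n/U_{n+1}$ (which is exactly the hypothesis) propagates to every level, and then a telescoping product together with closedness of $H$ gives $U_n\subseteq H$. Your diagnosis of the failure at $p=2$ is also right.
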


We have used the previous Lemma, to find examples where $\rho_{E,9}(\GQ)$ is the full inverse image of $\rho_{E,3}(\GQ)$. In order to show this, we used Magma to verify that $$|\Gal(\Q(E[9])/\Q)|/|\Gal(\Q(E[3])/\Q)|=3^4.$$
We obtained the following examples of each of the images defined modulo $3$:
\begin{itemize}
\item Image $\GL(2,\Z/3\Z)$ modulo $3$, the curve \texttt{11a1} is an example.
\item Image \texttt{3Cs.1.1} modulo $3$, the curve \texttt{14a1} is an example.
\item Image \texttt{3Cs} modulo $3$, the curve \texttt{98a3} is an example.
\item Image \texttt{3B.1.1} modulo $3$, the curve \texttt{30a1} is an example.
\item Image \texttt{3B.1.2} modulo $3$, the curve \texttt{20a3} is an example.
\item Image \texttt{3Ns} modulo $3$, the curve \texttt{338d1} is an example.
\item Image \texttt{3B} modulo $3$, the curve \texttt{50b1} is an example.
\item Image \texttt{3Nn} modulo $3$, the curve \texttt{245a1} is an example.
\end{itemize}
Let us verify, for instance, that if $E$ is the curve with Cremona label \texttt{11a1}, then $\rho_{E,3^\infty}(\GQ)=\GL(2,\Z_3)$. Either by looking up the mod $3$ image in the Cremona database \cite{cremonaweb}, or by direct computation (using the $3$-division polynomial and checking that $[\Q(E[3]):\Q]=48=(3-1)\cdot 3\cdot (3^2-1)=|\GL(2,\Z/3\Z)|$), we verify first that $\rho_{E,3}(\GQ)=\GL(2,\Z/3\Z)$. We note here that $[\Q(x(E[3])):\Q]=24$. Now we compute $[\Q(E[9]):\Q(E[3])]$. The $9$-th division polynomial for $E/\Q$ factors as $\psi_9(x)=\psi_3(x)f(x)$ over $\Q[x]$, where the degrees of $\psi_3$ and $f$ are $4$ and $36$, respectively. Here $\psi_3(x)$ is the $3$-rd division polynomial, whose splitting field is $\Q(x(E[3]))$, and the splitting field of $f(x)$ is $\Q(x(E[9]))$. The extension $L/\Q$ generated by a root of $f(x)$, thus, is a number field of degree $36$, and Magma tells us that the Galois group of $f$ has degree $1944$. Hence, the Galois closure of $L$, i.e., $\Q(x(E[9]))$ is of degree $1944=24\cdot 3^4$ over $\Q$, and the extension $\Q(x(E[9]))/\Q(x(E[3]))$ is of degree $3^4$. Therefore $$[\Q(E[9]):\Q(E[3])]=[\Q(x(E[9])):\Q(x(E[3]))]=3^4.$$
Hence, by Lemma \ref{lem-arai}, we conclude that $\rho_{E,3^\infty}(\GQ)=\GL(2,\Z_3)$, as desired.

Unfortunately, we have not been able to verify $\rho_{E,p^2}(\GQ)$ is the full inverse image of the mod-$p$ image, for any elliptic curve with non-surjective image modulo $p>3$.

 \subsection{About the Tables \ref{exSZ11} and \ref{exSZ37}}

As mentioned in Conjecture \ref{conj-zyw}, Sutherland and Zywina (\cite{sutherland}, \cite{zywina1}) have found a list of $63$ exceptional mod-$p$ images that do occur for non-CM curves $E/\Q$, and that, together with $\GL(2,\F_p)$, would be the complete  list if the answer to Serre's uniformity question is positive. We list all such possible images in Tables \ref{exSZ11} and \ref{exSZ37}, together with the values of the prime $p$, the label of the type of mod $p$ representation, and the constants $g_{0,1}(G)$, $m_{0,1}(G)$, and $g_{1,1}(G)=m_{1,1}(G)$ that appear in Theorem \ref{thm-mainintro3}. In other words, if $E/\Q$ is an elliptic curve such that $\rho_{E,p^\infty}(\GQ)$ is the full inverse image of its mod $p$ representation, then Theorem \ref{thm-mainintro3} applies with the constants as given in these tables. In addition to the constants we have added an example of an elliptic curve whose mod $p$ representation is the group indicated in the second column. However, as explained in the previous section, for $p>3$ we have not been able to verify that, for each of the examples $E/\Q$ we give, the representation $\rho_{E,p^\infty}$ is indeed defined modulo $p$. 

Finally, we note that $m_{0,1}(G)$ and $m_{1,1}(G)$ appear listed in \cite{sutherland}, Tables 3 and 4, as $d_1$ and $d$, respectively. The tables in \cite{sutherland} also give generators for each type of image.

\vskip 4in


\begin{table}
\begin{tabular}{ccccccc}
\hline
$p$ & G & $g_{0,1}(G)$ & $m_{0,1}(G)$ & $g_{1,1}(G)=m_{1,1}(G)$  & & Example $E/\Q$\\
\hline
$2$ &  \texttt{2Cs} &  $1$ & $1$ & $1$ & & \texttt{315b2}\\
$2$ &  \texttt{2B} &  $1$ & $1$ & $2$ & & \texttt{69a1}\\
$2$ &  \texttt{2Cn} &  $3$ & $3$ & $3$ & & \texttt{196a1}\\
$2$ &  $\GL(2,\F_2)$ &  $3$ & $3$ & $6$ & & \texttt{11a1}\\
\hline
$3$ &  \texttt{3Cs.1.1} &  $1$ & $1$ & $2$ & & \texttt{14a1}\\
$3$ &  \texttt{3Cs} &  $2$ & $2$ & $4$ & & \texttt{98a3}\\
$3$ &  \texttt{3B.1.1} &  $1$ & $1$ & $6$ & & \texttt{30a1}\\
$3$ &  \texttt{3B.1.2} &  $1$ & $2$ & $6$ & & \texttt{20a3}\\
$3$ &  \texttt{3Ns} &  $4$ & $4$ & $8$ & & \texttt{338d1}\\
$3$ &  \texttt{3B} &  $2$ & $2$ & $12$ & & \texttt{50b1}\\
$3$ &  \texttt{3Nn} &  $8$ & $8$ & $16$ & & \texttt{245a1}\\
$3$ &  $\GL(2,\F_3)$ &  $8$ & $8$ & $48$ & & \texttt{11a1}\\
\hline
$5$ &  \texttt{5Cs.1.1} &  $1$ & $1$ & $4$ & & \texttt{11a1}\\
$5$ &  \texttt{5Cs.1.3} &  $2$ & $2$ & $4$ & & \texttt{275b2}\\
$5$ &  \texttt{5Cs.4.1} &  $2$ & $2$ & $8$ & & \texttt{99d2}\\
$5$ &  \texttt{5Ns.2.1} &  $8$ & $8$ & $16$ & & \texttt{6975a1}\\
$5$ &  \texttt{5Cs} &  $4$ & $4$ & $16$ & & \texttt{18176b2}\\
$5$ &  \texttt{5B.1.1} &  $1$ & $1$ & $20$ & & \texttt{11a3}\\
$5$ &  \texttt{5B.1.2} &  $1$ & $4$ & $20$ & & \texttt{11a2}\\
$5$ &  \texttt{5B.1.3} &  $2$ & $4$ & $20$ & & \texttt{50a1}\\
$5$ &  \texttt{5B.1.4} &  $2$ & $2$ & $20$ & &  \texttt{50a3}\\
$5$ &  \texttt{5Ns} &  $8$ & $8$ & $32$ & &  \texttt{608b1}\\
$5$ &  \texttt{5B.4.1} &  $2$ & $2$ & $40$ & &  \texttt{99d1}\\
$5$ &  \texttt{5B.4.2} &  $2$ & $4$ & $40$ & &  \texttt{99d3}\\
$5$ &  \texttt{5Nn} &  $24$ & $24$ & $48$ & &  \texttt{675b1}\\
$5$ &  \texttt{5B} &  $4$ & $4$ & $80$ & &  \texttt{338d1}\\
$5$ &  \texttt{5S4} &  $24$ & $24$ & $96$ & &  \texttt{324b1}\\
$5$ &  $\GL(2,\F_5)$ &  $24$ & $24$ & $480$ & & \texttt{14a1}\\
\hline
$7$ &  \texttt{7Ns.2.1} &  $3$ & $6$ & $18$ & &  \texttt{2450ba1}\\
$7$ &  \texttt{7Ns.3.1} &  $6$ & $12$ & $36$ & &  \texttt{2450a1}\\
$7$ &  \texttt{7B.1.1} &  $1$ & $1$ & $42$ & &  \texttt{26b1}\\
$7$ &  \texttt{7B.1.2} &  $3$ & $3$ & $42$ & &  \texttt{637a1}\\
$7$ &  \texttt{7B.1.3} &  $1$ & $6$ & $42$ & &  \texttt{26b2}\\
$7$ &  \texttt{7B.1.4} &  $1$ & $3$ & $42$ & &  \texttt{294a1}\\
$7$ &  \texttt{7B.1.5} &  $3$ & $6$ & $42$ & &  \texttt{637a2}\\
$7$ &  \texttt{7B.1.6} &  $1$ & $2$ & $42$ & &  \texttt{294a2}\\
$7$ &  \texttt{7Ns} &  $12$ & $12$ & $72$ & &  \texttt{9225a1}\\
$7$ &  \texttt{7B.6.1} &  $2$ & $2$ & $84$ & &  \texttt{208d1}\\
$7$ &  \texttt{7B.6.2} &  $6$ & $6$ & $84$ & &  \texttt{5733d1}\\
$7$ &  \texttt{7B.6.3} &  $2$ & $6$ & $84$ & &  \texttt{208d2}\\
$7$ &  \texttt{7Nn} &  $48$ & $48$ & $96$ & &  \texttt{15341a1}\\
$7$ &  \texttt{7B.2.1} &  $3$ & $3$ & $126$ & &  \texttt{162b1}\\
$7$ &  \texttt{7B.2.3} &  $3$ & $6$ & $126$ & &  \texttt{162b3}\\
$7$ &  \texttt{7B} &  $6$ & $6$ & $252$ & &  \texttt{162c1}\\
$7$ &  $\GL(2,\F_7)$ &  $48$ & $48$ & $2016$ & & \texttt{11a1}\\
\hline
\end{tabular}
\caption{$g_{k,1}(G)$ and $m_{k,1}(G)$, for $k=0,1$, and example curves.}\label{exSZ11}
\end{table}

\begin{table}
\renewcommand{\arraystretch}{1.1}
\begin{tabular}{ccccccc}
\hline
$p$ & $G$ & $g_{0,1}(G)$ & $m_{0,1}(G)$ & $g_{1,1}(G)=m_{1,1}(G)$ & & Example $E/\Q$\\
\hline
$11$ &  \texttt{11B.1.4} &  $5$ & $5$ & $110$ & &  \texttt{121a2}\\
$11$ &  \texttt{11B.1.5} &  $5$ & $5$ & $110$ & &  \texttt{121c2}\\
$11$ &  \texttt{11B.1.6} &  $5$ & $10$ & $110$ & &  \texttt{121a1}\\
$11$ &  \texttt{11B.1.7} &  $5$ & $10$ & $110$ & &  \texttt{121c1}\\
$11$ &  \texttt{11B.10.4} &  $10$ & $10$ & $220$ & &  \texttt{1089f2}\\
$11$ &  \texttt{11B.10.5} &  $10$ & $10$ & $220$ & &  \texttt{1089f1}\\
$11$ &  \texttt{11Nn} &  $120$ & $120$ & $240$ & &  \texttt{232544f1}\\
$11$ &  $\GL(2,\F_{11})$ &  $120$ & $120$ & $13200$ & & \texttt{11a1}\\
\hline
$13$ &  \texttt{13S4} &  $24$ & $72$ & $288$ & &  \texttt{152100g1}\\
$13$ &  \texttt{13B.3.1} &  $3$ & $3$ & $468$ & &  \texttt{147b1}\\
$13$ &  \texttt{13B.3.2} &  $3$ & $12$ & $468$ & &  \texttt{147b2}\\
$13$ &  \texttt{13B.3.4} &  $6$ & $6$ & $468$ & &  \texttt{24843o1}\\
$13$ &  \texttt{13B.3.7} &  $6$ & $12$ & $468$ & &  \texttt{24843o2}\\
$13$ &  \texttt{13B.5.1} &  $4$ & $4$ & $624$ & &  \texttt{2890d1}\\
$13$ &  \texttt{13B.5.2} &  $4$ & $12$ & $624$ & &  \texttt{2890d2}\\
$13$ &  \texttt{13B.5.4} &  $12$ & $12$ & $624$ & &  \texttt{216320i1}\\
$13$ &  \texttt{13B.4.1} &  $6$ & $6$ & $936$ & &  \texttt{147c1}\\
$13$ &  \texttt{13B.4.2} &  $6$ & $12$ & $936$ & &  \texttt{147c2}\\
$13$ &  \texttt{13B} &  $12$ & $12$ & $1872$ & &  \texttt{2450l1}\\
$13$ &  $\GL(2,\F_{13})$ &  $168$ & $168$ & $26208$ & & \texttt{11a1}\\
\hline
$17$ &  \texttt{17B.4.2} &  $8$ & $8$ & $1088$ & &  \texttt{14450n1}\\
$17$ &  \texttt{17B.4.6} &  $8$ & $16$ & $1088$ & &  \texttt{14450n2}\\
$17$ &  $\GL(2,\F_{17})$ &  $288$ & $288$ & $78336$ & & \texttt{11a1}\\
\hline
$37$ &  \texttt{37B.8.1} &  $12$ & $12$ & $15984$ & &  \texttt{1225e1}\\
$37$ &  \texttt{37B.8.2} &  $12$ & $36$ & $15984$ & &  \texttt{1225e2}\\
$37$ &  $\GL(2,\F_{37})$ &  $1368$ & $1368$ & $1822176$ & & \texttt{11a1}\\
\hline
else &  $\GL(2,\F_{p})$ &  $p^2-1$ & $p^2-1$ & $(p-1)p(p^2-1)$ & & \texttt{11a1}\\
\hline
\end{tabular}
\caption{$g_{k,1}(G)$ and $m_{k,1}(G)$, for $k=0,1$, and example curves.}\label{exSZ37}
\end{table}

\eject


\begin{thebibliography}{9}
%
\bibitem{arai} K. Arai, {\it On uniform lower bound of the Galois images associated to elliptic curves},  J. Th\'eor. Nombres Bordeaux, Tome 20, n. 1 (2008), pp. 23-43.

\bibitem{baran} B. Baran, {\em Normalizers of non-split Cartan subgroups, modular curves, and the class number one
problem}, J. Number Theory 130 (2010), pp. 2753-2772.

\bibitem{bilu} Y. Bilu, P. Parent, {\em Serre's uniformity problem in the split Cartan case}, Ann. of Math., Vol. 173 (2011), Issue 1, pp. 569-584.

\bibitem{bilu2} Y. Bilu, P. Parent, M. Rebolledo, {\it Rational points on $X_0^+(p^r)$}, Ann. Inst. Fourier, Vol. 63 n. 3 (2013), pp. 957-984.



\bibitem{magma} W. Bosma, J. Cannon, and C. Playoust, {\it The Magma algebra system. I. The user language}, J. Symbolic Comput., 24 (1997), pp. 235-265.


\bibitem{bourdon} A. Bourdon, P. L. Clark, and P. Pollack, {\it Anatomy of Torsion in the CM Case}, preprint (arXiv:1506.00565).


\bibitem{cremonaweb} J. E. Cremona, {\it Elliptic curve data for conductors up to 350.000,} 2015. Available at  \href{http://homepages.warwick.ac.uk/~masgaj/ftp/data/}{http://homepages.warwick.ac.uk/$\sim$masgaj/ftp/data/}

\bibitem{GJT14} E.~Gonz\'alez--Jim\'enez, J.M.~Tornero. {\em Torsion of rational elliptic curves over quadratic fields II.} Rev. R. Acad. Cienc. Exactas F\'{\i}s. Nat. Ser. A Math. RACSAM, to appear (arXiv:1411.3468).


\bibitem{jeon}
D. Jeon, C.H. Kim, Y. Lee. {\em Infinite families of elliptic curves over dihedral quartic number fields.} J. Number Theory 133 (2013), pp. 115-122.

\bibitem{kami} S. Kamienny, {\em  Torsion points on elliptic curves and q-coefficients of modular forms}, Invent. Math. 109 (1992), pp. 129-133.


\bibitem{kenku2} M. A. Kenku, F. Momose, {\em Torsion points on elliptic curves defined over quadratic fields}, Nagoya Math. J. 109 (1988), pp. 125-149.


\bibitem{lozano0} \'A. Lozano-Robledo, {\it On the field of definition of $p$-torsion points on elliptic curves over the rationals}, Math. Ann., Vol. 357 (2013), Issue 1, pp. 279-305.



\bibitem{mazur1} B. Mazur, {\it Rational isogenies of prime degree}, Invent. Math. 44 (1978), pp. 129 - 162.


\bibitem{najman} F. Najman, {\it Torsion of rational elliptic curves over cubic fields and sporadic points on $X_1(n)$}, Math. Res. Lett., to appear (arXiv:1211.2188).

\bibitem{prasad} D. Prasad, C. S. Yogananda, {\em Bounding the torsion in CM elliptic curves}, C. R. Math. Acad. Sci. Soc. R. Can. 23 (2001), pp. 1-5.


\bibitem{rouse} J. Rouse, D. Zureick-Brown, {\em Elliptic curves over $\Q$ and $2$-adic images of Galois}, Research in Number Theory, 1:12 (2015), data files and subgroup descriptions available at \href{http://users.wfu.edu/rouseja/2adic/}{http://users.wfu.edu/rouseja/2adic/}.


\bibitem{serre1} J-P. Serre, {\em Propri\'et\'es galoisiennes des points d'ordre fini des courbes elliptiques}, Invent. Math. 15 (1972), pp. 259-331.

\bibitem{serre2} J-P. Serre, {\em Quelques applications du th\'eor\`eme de densit\'e de Chebotarev}, Publ. Math. IHES 54 (1981), pp. 123-201.


\bibitem{silverberg} A. Silverberg, {\em Torsion points on abelian varieties of CM-type}. Compos. Math. 68 (1988), n. 3, pp. 241-249.


\bibitem{sutherland} A. Sutherland, {\em Computing images of Galois representations attached to elliptic curves}, preprint (arXiv:1504.07618).

\bibitem{zywina1} D. Zywina, {\em On the possible images of the mod $\ell$ representations associated to elliptic curves over $\Q$}, preprint (arXiv:1508.07660).

\end{thebibliography}
\end{document}